\theoremstyle{plain}
        \newtheorem{theorem}{Theorem}[section]
\newtheorem{definition}[theorem]{Definition}
        \newtheorem{lemma}[theorem]{Lemma}
        \newtheorem{remark}[theorem]{Remark}
\numberwithin{equation}{section}
\newcommand \AAA A 
\newcommand \la \langle
\newcommand \ra \rangle 
\newcommand{\R}{\mathbb R} 
\newcommand \be         {\begin{equation}}
\newcommand \ee         {\varepsilon} 
\newcommand \del        \partial
\newcommand \eps    \varepsilon
\newcommand \lam \lambda
\newcommand{\weaks}{\stackrel{*}{\rightharpoonup}}   
\let\oldmarginpar\marginpar
\renewcommand\marginpar[1]{\-\oldmarginpar[\raggedleft\footnotesize #1]%
{\raggedright\footnotesize #1}}
\begin{document} 

\title{Initial-boundary value problems for nearly incompressible vector fields, and applications to the Keyfitz and Kranzer system} 
\author{Anupam Pal Choudhury\footnote{APC: Departement Mathematik und Informatik, Universit\"at Basel, Spiegelgasse 1, CH-4051 Basel, Switzerland. Email: anupampcmath@gmail.com}, Gianluca Crippa\footnote{GC: Departement Mathematik und Informatik, Universit\"at Basel, Spiegelgasse 1, CH-4051 Basel, Switzerland. Email: gianluca.crippa@unibas.ch}, Laura V. Spinolo\footnote{LVS: IMATI-CNR, via Ferrata 1, I-27100 Pavia, Italy. Email:  spinolo@imati.cnr.it}   
}  
\date{}  
\maketitle

\begin{abstract}  
We establish existence and uniqueness results for initial boundary value problems with nearly incompressible vector fields. We then apply our results to establish well-posedness
of the initial-boundary value problem for the Keyfitz and Kranzer system of conservation laws in several space dimensions.
\end{abstract}


 \section{Introduction} 
 The Keyfitz and Kranzer system is a system of conservation laws in several space dimensions that was introduced in~\cite{KK} and takes the form 
\begin{equation}
\label{e:KK}
 \partial_{t} U+\sum_{i=1}^{d} \partial_{x_{i}} (f^{i}(\vert U \vert) U) =0.
 \notag
 \end{equation} 
 The unknown is $U: \R^d \to \R^N$ and $|U|$ denotes its modulus. Also, for every $i=1, \dots, d$ the function $f^i: \R \to \R^N$ is smooth. In this work we establish existence and uniqueness results for the initial-boundary value problem associated to~\eqref{e:KK}. 
 
The well-posedness of the Cauchy problem associated to~\eqref{e:KK} was established by Ambrosio, Bouchut and De Lellis in~\cite{ABD,AD} by relying on a strategy suggested by Bressan in~\cite{Br}. Note that the results in~\cite{ABD,AD} are one of the very few well-posedness results that apply to systems of conservation laws in several spaces dimensions. Indeed, establishing either existence or uniqueness for a general system of conservation laws  in several space dimensions is presently a completely open problem, see~\cite{Daf,Serre1,Serre2} for an extended discussion on this topic. 

The basic idea underpinning the argument in~\cite{ABD,AD} is that~\eqref{e:KK} can be (formally) written as the coupling between a \emph{scalar} conservation law and a transport equation with very irregular coefficients. The scalar conservation law is solved by using the foundamental work by Kru{\v{z}}kov~\cite{Kr}, while the transport equation is handled by relying on Ambrosio's celebrated  
extension of the DiPerna-Lions' well-posendess theory, see~\cite{A} and~\cite{Diperna-Lions}, respectively, and~\cite{AC,Delellis2} for an overview.  Note, however, that Ambrosio's theory~\cite{A} does not directly apply 
to~\eqref{e:KK} owing to a lack of control on the divergence of the vector fields. In order to tackle this issue, a theory of \textit{nearly incompressible vector fields} was developed, see~\cite{Delellis1} for an extended discussion.  Since we will need it in the following, we recall the definition here.  
\begin{definition}\label{near-incom}
  Let $\Omega \subseteq \mathbb{R}^{d} $ be an open set and $T>0 $. We say that a vector field
  $b \in L^{\infty}((0,T) \times \Omega; \mathbb{R}^{d})$ is \textbf{nearly incompressible} if there are a density function $\rho \in L^{\infty}((0,T) \times \Omega) $  and a constant $C > 0 $ such that
  \begin{itemize}
  \item[i.] $0 \leq \rho \leq C, \ \mathcal{L}^{d+1}-a.e. \ \text{in} \ (0,T) \times \Omega,    $ and
  \item[ii.] the equation
  \begin{equation}
  \label{e:continuityrho}
  \partial_{t}\rho +\mathrm{div}(\rho b)=0
  \end{equation}
 holds in the sense of distributions in $(0, T) \times \Omega$.
  \end{itemize} 
 \end{definition}
The analysis in~\cite{ABD, AD, Delellis1} ensures that, if $b \in L^\infty ((0, T) \times \R^d; \R^d) \cap BV ((0, T) \times \R^d; \R^d)$ is a nearly incompressible vector field with density $\rho \in BV ((0, T) \times \R^d)$, then the Cauchy problem 
$$
\left\{
\begin{array}{ll}
         \partial_{t}[ \rho u] +\mathrm{div}[ \rho bu ]=0 &  
         \text{in $(0, T) \times \R^d$}\\
         u  = \overline{u} &  
         \text{at $t=0$}\\
\end{array} 
\right.
$$
is well-posed for every initial datum $\overline{u} \in L^\infty (\R^d)$. This result is pivotal to the proof of the well-posedness of the Cauchy problem for the Keyfitz and Kranzer system~\eqref{e:KK}. See also~\cite{ACFS} for applications of nearly incompressible vector fields to the so-called chromatography system of conservation laws. Note, furthermore, that here and in the following we denote by $BV$ the space of functions with \emph{bounded variation}, see~\cite{AFP} for an extended introduction. 

The present paper aims at extending the analysis in~\cite{ABD, AD, Delellis1}  to the case of initial-boundary value problems.  First, we establish the well-posedness of initial-boundary value problems with $BV$, nearly incompressible 
vector fields, see Theorem~\ref{IBVP-NC} below for the precise statement. In doing so, we rely on well-posedness results for continuity and transport equations with weakly differentiable vector fields established in~\cite{CDS1}, see also~\cite{CDS2} for related results.  Next, we discuss the applications to the Keyfitz and Kranzer system~\eqref{e:KK}. 

We now provide a more precise description of our results concerning nearly incompressible vector fields. We fix an open, bounded set $\Omega$ and a nearly incompressible vector field $b$ with density $\rho$ and we consider the initial-boundary value problem
 \begin{equation}
 \left\{
 \begin{array}{ll}
 \partial_{t} [\rho u]+ \text{div}[\rho u b]=0 & 
 \text{in $(0,T)\times \Omega$}\\
 u=\overline{u} & \text{at $t=0$}\\
 u= \overline{g} & \text{on $ \Gamma^{-}$},
  \end{array}
  \right.
 \label{prob-2}
 \end{equation} 
 where $\Gamma^{-}$ is the  part of the boundary $(0,T) \times \partial \Omega $ where the characteristic lines of the vector field $\rho b $ are \emph{inward pointing}. Note that, in general, if $b$ and $\rho$ are only weakly differentiable, one cannot expect that the solution $u$ is a regular function. Since $\Gamma^-$ will in general be negligible, then assigning the value of $u$ on $\Gamma^-$ is in general not possible. In~\S~\ref{s:formu} we provide the rigorous (distributional) formulation of the initial-boundary value problem~\eqref{prob-2} by relying on the theory of normal traces for low regularity vector fields, see~\cite{ACM,Anz,CF,CTZ}.    
 
 We can now state our well-posedness result concerning~\eqref{prob-2}. 
 \begin{theorem}\label{IBVP-NC}
 Let $T > 0 $ and $\Omega \subseteq \mathbb{R}^{d}$ be an open, bounded set with $C^2 $ boundary. Also, let  $b \in BV((0,T) \times \Omega; \mathbb{R}^{d}) \cap L^{\infty}((0,T) \times \Omega; \mathbb{R}^{d})$ be a nearly incompressible vector field with density $\rho \in BV((0,T) \times \Omega) \cap L^{\infty}((0,T) \times \Omega)$, see Definition~\ref{near-incom}. Further, assume that 
 $\overline{u} \in L^{\infty}(\Omega) $ and $\overline{g} \in L^{\infty}(\Gamma^{-}) $.
 
 Then there is a distributional solution $u \in L^{\infty}((0,T) \times \Omega) $ 
 to \eqref{prob-2} satisfying the maximum principle
 \begin{equation}
 \label{e:maxprin}
    \| u \|_{L^\infty} \leq \max \{ \| \overline u \|_{L^\infty}, \| \overline g \|_{L^\infty}  \}.
 \end{equation}
Also, if $u_1, \; u_2 \in L^\infty ((0, T) \times \Omega)$ are two different distributional solutions of the 
 same initial-boundary value problem, then $\rho u_1 = \rho u_2$
 $a.e.$ in $(0,T) \times \Omega$. 
 \end{theorem}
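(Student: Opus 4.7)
The strategy is to view $m := \rho u$ as a solution of the \emph{same} continuity equation $\partial_t m + \text{div}(m b)=0$ satisfied by $\rho$ itself, and then to leverage the well-posedness theory for initial-boundary value problems for continuity equations with $BV$ vector fields developed in~\cite{CDS1}. Both existence and uniqueness will be reduced to properties of this linear continuity equation, together with the near-incompressibility of $b$ and a renormalization argument carried out up to the boundary.

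For existence, I would regularize $b$ and $\rho$ by suitable smooth approximations $b_\eps$, $\rho_\eps$ (after extension across $\partial\Omega$ and convolution), arranging via classical commutator estimates that $\partial_t \rho_\eps + \text{div}(\rho_\eps b_\eps) \to 0$ and that $\rho_\eps$ stays nonnegative and uniformly bounded. The corresponding smooth IBVP for $u_\eps$ with velocity $b_\eps$ is solved by the method of characteristics, yielding a classical $u_\eps$ that satisfies~\eqref{e:maxprin} uniformly in $\eps$. Then $m_\eps := \rho_\eps u_\eps$ is uniformly bounded in $L^\infty$ and, along a subsequence, $m_\eps \weaks m$. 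Since $b_\eps \to b$ strongly in $L^1$, the linear equation passes to the limit, giving $\partial_t m + \text{div}(m b) = 0$. The candidate solution is defined as $u := m/\rho$ on $\{\rho>0\}$ (and arbitrarily on $\{\rho=0\}$); the initial and inflow traces of $u_\eps$ pass to the limit via the weak continuity of normal traces from~\cite{ACM,CF,CTZ}, so that $u$ solves~\eqref{prob-2} in the distributional formulation of~\S\ref{s:formu}, and the maximum principle survives the weak-$*$ limit.

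For uniqueness, by linearity it suffices to prove that $\overline u =0$ and $\overline g =0$ force $\rho u=0$ a.e. The key ingredient is a renormalization property: every distributional solution $u$ of~\eqref{prob-2} should satisfy
\begin{equation*}
\partial_t \bigl(\rho\, \Phi(u)\bigr) + \text{div}\bigl(\rho\, \Phi(u)\, b\bigr) = 0
\end{equation*}
for every $\Phi \in C^1(\R)$, with initial trace $\rho|_{t=0}\Phi(\overline u)$ and inflow trace $\Phi(\overline g)$. In the interior this follows from Ambrosio's argument~\cite{A} applied to the pair $(\rho,b)\in BV$ via commutator estimates on mollifications. Applied with $\Phi(s)=s^2$ to the difference $u_1-u_2$ of two solutions sharing the same data, this identity combined with the uniqueness statement from~\cite{CDS1} for the continuity equation with zero data yields $\rho(u_1-u_2)^2 \equiv 0$, hence $\rho u_1 = \rho u_2$.

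The main obstacle is to propagate the renormalization identity all the way up to $\partial\Omega$, ensuring that the inflow trace of $\rho\,\Phi(u)$ is exactly $\rho|_{\Gamma^-}\Phi(\overline g)$ rather than merely some a priori unknown trace. This requires the normal-trace machinery of~\cite{Anz,ACM,CF,CTZ} combined with a careful decomposition of tangential and normal mollifications near $\partial\Omega$, in the spirit of~\cite{CDS1}, so that the commutator between the mollification and the differential operator produces a boundary contribution that is correctly identified with $\overline g$ through the trace of $b$. Once this boundary renormalization is in place, a standard Gronwall-type argument in time closes the proof.
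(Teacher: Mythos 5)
Your overall plan — regularize, solve by characteristics, pass to the limit for existence; renormalize up to the boundary for uniqueness — is the same skeleton as the paper's, but there are two points where the proposal glosses over the genuinely delicate issues and where the paper's specific choices matter.

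For existence, the step ``extension across $\partial\Omega$ and convolution'' together with ``the inflow traces pass to the limit via weak continuity of normal traces'' has a real gap. The inflow set $\Gamma^-_\eps$ on which the boundary datum is imposed for the approximate problem is defined via the sign of $\mathrm{Tr}(\rho_\eps b_\eps)$, and in order to identify the limit boundary datum on $\Gamma^-$ one needs $\mathrm{Tr}(\rho_\eps b_\eps)\to\mathrm{Tr}(\rho b)$ \emph{strongly} in $L^1(\Gamma)$, plus a.e.\ convergence of the indicator functions of $\Gamma^\pm_\eps$ (this is exactly Lemma~\ref{l:traces}). A generic extension-and-mollification does not deliver that: $BV$ traces are not stable under weak-$*$ convergence, only under strict convergence, and the ``weak continuity of normal traces'' results in~\cite{ACM,CF,CTZ} concern a fixed field on a family of parallel hypersurfaces, not a sequence of fields on a fixed boundary. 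The paper therefore does not use a plain convolution; it uses the Anzellotti/Giaquinta variant of Meyers--Serrin (Theorem~\ref{t:traceafp}, from \cite[Thms.~3.9, 3.88]{AFP}) producing $\tilde\rho_m$, $\widetilde{(b\rho)}_m$ whose traces converge strongly in $L^1(\partial\Lambda)$, sets $\rho_m=\tfrac1m+\tilde\rho_m$ and $b_m=\widetilde{(b\rho)}_m/\rho_m$, and then controls $h_m=\partial_t\rho_m+\mathrm{div}(\rho_m b_m)\to 0$ in $L^1$ via the same localized construction (Lemma~\ref{l:meyerserrin}). You should make explicit that the approximation must be chosen so that the traces converge strongly; otherwise the identification of the inflow boundary condition in the limit fails.

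For uniqueness your strategy is essentially correct, but two remarks. First, the boundary renormalization does not have to be re-derived from a tangential/normal splitting of mollifiers: since $\rho$ and $b$ are $BV$ and $P=(\rho,\rho b)$ is a bounded $BV$ vector field, the trace renormalization result of~\cite{ACM} (quoted as Theorem~\ref{trace-renorm}) directly gives $\mathrm{Tr}\bigl(\rho\,\tilde\beta(u_2-u_1)b\bigr)=\tilde\beta\bigl(\mathrm{Tr}(\rho(u_2-u_1)b)/\mathrm{Tr}(\rho b)\bigr)\mathrm{Tr}(\rho b)$, which vanishes on $\Gamma^-$ and is $\ge 0$ elsewhere; combined with the interior renormalization of~\cite[Lemma 5.10]{Delellis1}, this is all that is needed. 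Second, the final step is not a Gronwall argument: one tests with $\psi_n(t,x)=\chi_n(t)\nu(x)$, $\nu\equiv 1$ on $\Omega$, so that $\partial_t\nu + b\cdot\nabla\nu=0$, and lets $\chi_n'\to-\delta_{\bar t}$ to conclude $\int_\Omega \rho(\bar t,\cdot)\,\tilde\beta(u_2-u_1)(\bar t,\cdot)\le 0$ for a.e.\ $\bar t$ directly. Note also that the paper actually proves the stronger comparison principle, which is why it uses the one-sided renormalization $\tilde\beta(s)=s^2\mathbf{1}_{\{s\ge 0\}}$ rather than $\Phi(s)=s^2$; for the uniqueness claim of Theorem~\ref{IBVP-NC} your choice $\Phi(s)=s^2$ would suffice.
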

 Note that the reason why we do not exactly obtain uniqueness of the function $u$ is because $\rho$ can attain the value $0$. If $\rho$ is bounded away from $0$, then the distributional solution $u$ of~\eqref{prob-2} is unique.
 Also, we refer to~\cite{Bar,Boyer,CDS1,CDS2,GS,Mis} for related results on the well-posedness of initial-boundary value problems  for continuity and transport equation with weakly differentiable vector fields. 
 
 In~\S~\ref{s:KK} we discuss the applications of Theorem~\ref{IBVP-NC} to the Keyfitz and Kranzer system and our main well-posedness result is Theorem~\ref{t:KK}. Note that the proof of Theorem~\ref{t:KK} combines Theorem~\ref{IBVP-NC}, the analysis in~\cite{Delellis1}, and well-posedness results for the initial-boundary value problems for scalar conservation laws established in~\cite{BLN, CR, Serre2}. 
 
 \subsection*{Paper outline}
In~\S~\ref{s:prel} we go over some preliminary results concerning normal traces of weakly differentiable vector fields.  By relying on these results, in~\S~\ref{s:formu} we provide the rigorous formulation of the initial-boundary value problem~\eqref{prob-2}. In~\S~\ref{s:exi} we establish the existence part of Theorem~\ref{IBVP-NC}, and in~\S~\ref{s:uni} the uniqueness. 
In~\S~\ref{s:ssc} we establish some stability and space continuity property results. Finally, in~\S~\ref{s:KK} we discuss the applications to the Keyfitz and Kranzer system.   
 
\subsection*{Notation}
For the reader's convenience, we collect here the main notation used in the present paper. 

\begin{itemize}
\item $\mathrm{div}$: the divergence, computed with respect to the $x$ variable only.
\item $\mathrm{Div}$: the complete divergence, i.e. the divergence computed with respect to the $(t, x)$ variables. 
\item $\mathrm{Tr} ( B, \partial \Lambda)$: the normal trace of the bounded, measure-divergence vector field $B$ on the boundary of the set $\Lambda$, see \S~\ref{s:prel}.  
\item $(\rho u)_0$,  
$\rho_0$: the initial datum of the functions $\rho u$ and $\rho$, see Lemma~\ref{trace-existence} and Remark~\ref{r} .
\item $ T (f)$: the trace of the $BV$ function $f$, see Theorem~\ref{bv-trace}.
\item $\mathcal H^s$: the $s$-dimensional Hausdorff measure.
\item $f_{|_E}$: the restriction of the function $f$ to the set $E$. 
\item $\mu \llcorner E$: the restriction of the measure $\mu$ to the measurable set $E$. 
\item $a.e.$: almost everywhere. 
\item $|\mu|$: the total variation of the measure $\mu$.
\item $a \cdot b$: the (Euclidean) scalar product between $a$ and $b$.  
\item $\mathbf{1}_E:$ the characteristic function of the measurable set $E$. 
\item $\Gamma, \Gamma^-, \Gamma^+, \Gamma^0$: see~\eqref{e:gamma}. 
\item $\vec n$: the outward pointing, unit normal vector to $\Gamma$. 
\end{itemize}

 \section{Preliminary results}
 \label{s:prel}
 In this section, we briefly recall some notions and results that shall be used in the sequel. 
 
First, we discuss the notion of normal trace for weakly differentiable vector fields, see~\cite{ACM,Anz,CF,CTZ}. Our presentation here closely follows that of \cite{ACM}. Let $\Lambda \subseteq \mathbb{R}^{N} $ be an open set and let us denote by $\mathcal{M}_{\infty}(\Lambda) $, the family of bounded, measure-divergence vector fields. The space $\mathcal{M}_{\infty}(\Lambda) $, therefore, consists of bounded functions $B \in L^{\infty}(\Lambda;\mathbb{R}^{N})$ such that the distributional divergence of $B$ (denoted by $\text{Div}  B $) is a locally bounded Radon measure on $\Lambda$.

The normal trace of $B \in \mathcal{M}_{\infty}(\Lambda)$ on the boundary $\partial \Lambda $ can be defined as follows.
 \begin{definition}
 Let $\Lambda \subseteq \mathbb{R}^{N}$ be an open and bounded set with  Lipschitz continuous boundary and let $B \in \mathcal{M}_{\infty}(\Lambda)$. The normal trace of $B$ on $\partial \Lambda $ is a distribution 
defined by the identity
\begin{equation}
\Big\langle \emph{Tr}(B,\partial \Lambda), \psi \Big\rangle = \int_{\Lambda} \nabla \psi \cdot B \ dy + \int_{\Lambda} \psi\  d(\emph{Div} B) , \qquad \forall \ \psi \in C^{\infty}_{c}(\mathbb{R}^{N}).
\label{prel-1}
\end{equation}
Here $\emph{Div} B $ denotes the distributional divergence of $B$ and is a bounded Radon measure on $\Lambda $.  
\end{definition}
Note that, owing to the Gauss-Green formula, if $B$ is a smooth vector field, then $\mathrm{Tr}(B,\partial \Lambda) = B \cdot \vec n$,
 where $\vec n$ denotes the outward pointing, unit normal vector to $\partial \Lambda$.

Note, furthermore, that the analysis in~\cite{ACM} shows that the normal trace distribution satisfies the following properties. 
 \begin{itemize}
 \item[(a)] The normal trace distribution is induced by an $L^{\infty}$ function on $\partial \Lambda $, which we shall continue to refer to as $\mathrm{Tr}(B,\partial \Lambda) $. The bounded function $\mathrm{Tr}(B,\partial \Lambda) $ satisfies \[\Vert \mathrm{Tr}(B,\partial \Lambda) \Vert_{L^{\infty}(\partial \Lambda)} \leq \Vert B \Vert_{L^{\infty}(\Lambda)}. \] 
 
 \item[(b)] Let $\Sigma $ be a Borel set contained in $\partial \Lambda_{1} \cap \partial \Lambda_{2} $, and let $\vec{n}_{1}=\vec{n}_{2} \ \text{on}\ \Sigma$ (here $\vec{n}_{1},\vec{n}_{2} $ denote the outward pointing, unit normal vectors to $\partial \Lambda_{1},\partial \Lambda_{2} $ respectively). Then 
 \begin{equation}
 \mathrm{Tr}(B, \partial \Lambda_{1}) = \mathrm{Tr}(B, \partial \Lambda_{2}) \qquad \text{$\mathcal{H}^{N-1}$-a.e.~on $\Sigma$.}
 \label{prel-2} 
 \end{equation} 
 \end{itemize}
In the following we will use several times the following renormalization result, which was established in~\cite{ACM}. 
\begin{theorem}\label{trace-renorm}
Let $B \in BV (\Lambda;\mathbb{R}^{N})\cap L^{\infty}(\Lambda;\mathbb{R}^{N}) $ and $w \in L^{\infty}(\Lambda) $ be such that $\emph{Div} (wB )$ is a Radon measure. If $\Lambda' \subset \subset \Lambda $ is an open set with bounded and Lipschitz continuous boundary and $h \in C^{1}(\mathbb{R})$, then
\begin{equation}
\emph{Tr}(h(w)B,\partial \Lambda')=h\left(\frac{\emph{Tr}(wB,\partial \Lambda')}{\emph{Tr}(B,\partial \Lambda')}\right) \emph{Tr}(B,\partial \Lambda') \qquad \text{$\mathcal{H}^{N-1}$-a.e.~on~$\partial \Lambda'$,}
\notag
\end{equation}
where the ratio $\displaystyle{\frac{\emph{Tr}(wB,\partial \Lambda')}{\emph{Tr}(B,\partial \Lambda')} }$ is arbitrarily defined at points where the trace $\emph{Tr}(B,\partial \Lambda') $ vanishes. 
\end{theorem}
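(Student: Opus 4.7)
The plan is to reduce the claimed identity to a pointwise chain rule on slices transverse to the boundary, combining the localization property of normal traces with Ambrosio's renormalization for $BV$ vector fields.

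First I would localize. By property (b) recalled above, the identity is local on $\partial \Lambda'$: up to a finite partition of unity and a bilipschitz flattening of the boundary, I can assume $\Lambda' = U \cap \{y_N < 0\}$ for some open cylinder $U$, with $\partial \Lambda' \cap U \subset \{y_N = 0\}$ and outward unit normal $\vec n = e_N$. The change of variables preserves the membership $B \in \mathcal{M}_\infty(\Lambda) \cap BV \cap L^\infty$, the fact that $\mathrm{Div}(wB)$ is a Radon measure, and both sides of the claimed identity.

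Next I would represent the three relevant normal traces as one-sided slice averages. A key tool already developed in~\cite{ACM} states that whenever $F \in \mathcal{M}_\infty(U \cap \{y_N<0\})$, one has
\begin{equation}
\mathrm{Tr}(F,\partial \Lambda')(y') = \lim_{r \to 0^+} \frac{1}{r} \int_{-r}^{0} F(y',s) \cdot e_N \, ds \qquad \text{for $\mathcal{H}^{N-1}$-a.e.\ $y' \in \{y_N = 0\}$.}
\notag
\end{equation}
I would apply this to the three fields $F = B$, $F = wB$, and $F = h(w)B$; the third choice is legitimate because Ambrosio's chain rule for $BV$ vector fields guarantees that $\mathrm{Div}(h(w)B)$ is again a Radon measure as soon as $B \in BV \cap L^\infty$ and $\mathrm{Div}(wB)$ is a measure.

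The last step is a pointwise chain rule on a fixed slice. Fix a boundary point $y'$ at which the three slice limits above exist. If $\mathrm{Tr}(B,\partial \Lambda')(y') \neq 0$, an Ambrosio-type renormalization argument provides a one-sided boundary value $w^-(y')$ of $w$ along the segment $s \mapsto (y',s)$, and identifies it with $\mathrm{Tr}(wB,\partial \Lambda')(y')/\mathrm{Tr}(B,\partial \Lambda')(y')$; composing with the $C^1$ function $h$ and multiplying by $\mathrm{Tr}(B,\partial \Lambda')(y')$ gives the desired equality at $y'$. On the set where $\mathrm{Tr}(B,\partial \Lambda')$ vanishes, the $L^\infty$ bound on $w$ together with the same slice representation forces $\mathrm{Tr}(h(w)B,\partial \Lambda')$ to vanish as well, so the arbitrary convention for the ratio is harmless.

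The main obstacle is precisely this last step, namely the identification of the ratio of normal traces with a genuine one-sided trace of $w$ to which $h$ can be applied pointwise. This requires an Ambrosio-type commutator estimate: mollifying $w$ in the tangential variable $y'$ only and expanding $\mathrm{Div}(h(w_\eps)B)$ via the classical smooth chain rule, one must show that the defect
\begin{equation}
\mathrm{Div}(h(w_\eps) B) - h'(w_\eps)\, \mathrm{Div}(w_\eps B) - \bigl( h(w_\eps) - w_\eps h'(w_\eps) \bigr)\, \mathrm{Div}(B)
\notag
\end{equation}
tends to zero (as a measure) when $\eps \to 0^+$, which is exactly the point at which the $BV$ regularity of $B$ enters in an essential way. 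The slice representation then propagates the resulting renormalized identity up to the boundary and closes the argument.
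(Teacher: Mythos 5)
A framing remark first: the paper does not prove Theorem~\ref{trace-renorm}. It is quoted directly from~\cite{ACM} and subsequently used as a black box, so there is no in-paper proof to set your sketch against; it has to be assessed on its own merits.

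The central gap lies in the formula you invoke as ``a key tool already developed in~\cite{ACM},'' namely the pointwise a.e.\ slice representation $\mathrm{Tr}(F,\partial\Lambda')(y')=\lim_{r\to 0^+}r^{-1}\int_{-r}^{0}F(y',s)\cdot e_N\,ds$ for a \emph{general} $F\in\mathcal{M}_{\infty}$. That is not what~\cite{ACM} establishes. The slice result actually available there (recorded above as Theorem~\ref{Weak-continuity}) gives only weak-$^*$ convergence of the one-sided slice traces $\alpha_r$ to the normal trace, not pointwise a.e.\ convergence of one-sided averages. A pointwise characterization does hold for $B$ itself, because $B\in BV$ and $BV$ functions have strong one-dimensional slice limits along a.e.\ line; but $wB$ and $h(w)B$ are merely bounded with measure divergence and enjoy no such fine properties. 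Both your pointwise chain rule at a fixed $y'$ and your treatment of the degenerate set $\{\mathrm{Tr}(B,\partial\Lambda')=0\}$ rest on applying this representation precisely to $wB$ and $h(w)B$, so the argument does not close. The companion assertion that an ``Ambrosio-type renormalization argument provides a one-sided boundary value $w^-(y')$ of $w$'' is a placeholder rather than a step: a generic $w\in L^\infty$ has no trace on a lower-dimensional set, and manufacturing a quantity that plays that role is essentially the content of the theorem, not a tool one may quote. You do correctly single out the tangential-only mollification and the Ambrosio commutator estimate as the deep ingredients, but what is missing is the actual bridge in~\cite{ACM} from interior commutator decay to an identity between boundary traces, which runs through the weak-$^*$ slice-continuity of Theorem~\ref{Weak-continuity} together with the strong $BV$ slice theory for $B$, rather than through any pointwise slice formula for $\mathrm{Tr}(h(w)B,\partial\Lambda')$.
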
   
We can now introduce the notion of normal trace on a general bounded, Lipschitz continuous, oriented hypersurface $\Sigma \subseteq \mathbb{R}^{N}$ in the following manner. Since $\Sigma $ is oriented, an orientation of the normal vector $\vec{n}_{\Sigma} $ is given. We can then find a domain $\Lambda_{1} \subseteq \mathbb{R}^{N} $ such that $\Sigma \subseteq \partial \Lambda_{1} $ and the normal vectors $\vec{n}_{\Sigma}, \vec{n}_{1} $ coincide. Using \eqref{prel-2}, we can then define
 \[\text{Tr}^{-}(B,\Sigma):= \text{Tr}(B,\partial \Lambda_{1}). \]
 Similarly, if $\Lambda_{2} \subseteq \mathbb{R}^{N} $ is an open set satisfying $\Sigma \subseteq \partial \Lambda_{2} $, and $\vec{n}_{2}=-\vec{n}_{\Sigma} $, we can define
 \[\text{Tr}^{+}(B,\Sigma):=- \text{Tr}(B,\partial \Lambda_{2}). \]
 Furthermore we have the formula
 \[(\text{Div} B)\llcorner \Sigma= \Big( \text{Tr}^{+}(B,\Sigma)-\text{Tr}^{-}(B,\Sigma) \Big) \mathcal{H}^{N-1} \llcorner \Sigma. \]
 Thus $\text{Tr}^{+} $ and $\text{Tr}^{-} $ coincide $\mathcal{H}^{N-1}- $a.e. on $\Sigma$ if and only if $\Sigma $ is a $(\text{Div} B)$-negligible set.\\  

 We next recall some results from \cite{ACM} concerning space continuity.
 \begin{definition}\label{graph}
 A family of oriented surfaces $\{\Sigma_{r} \}_{r \in I} \subseteq \mathbb{R}^{N} $ (where $I \subseteq \mathbb{R}$ is an open interval) is called a family of graphs if there
 exist 
 \begin{itemize}
 \item a bounded open set $D \subseteq \mathbb{R}^{N-1}$,
 \item a Lipschitz function $f:D \rightarrow \mathbb{R}$,
 \item a system of coordinates $(x_{1},\cdots,x_{N})$
 \end{itemize}
 such that the following holds true:
 For each $r \in I$, we can write
 \begin{equation}
 \Sigma_{r}=\big\{(x_{1},\cdots,x_{N}): f(x_{1},\cdots,x_{N-1})-x_{N}=r \big\},
 \label{ACM-99}
 \end{equation}
 and the orientation of $\Sigma_{r}$ is determined by the normal $\displaystyle{\frac{(-\nabla f,1)}{\sqrt{1+\vert \nabla f \vert^{2}}} }$.  
 \end{definition}
We now quote a space continuity result. 
 \begin{theorem}[see \cite{ACM}]\label{Weak-continuity}
 Let $B \in \mathcal{M}_{\infty}(\mathbb{R}^{N})$ and let $\{\Sigma_{r} \}_{r \in I} $ be a family of graphs as above. For a fixed $r_{0} \in I$, let us define the functions $\alpha_{0}, \alpha_{r}: D \rightarrow \mathbb{R} $ as
\begin{equation}
\begin{aligned}
\alpha_{0}(x_{1},\cdots,x_{N-1})&:=\emph{Tr}^{-}(B,\Sigma_{r_{0}})(x_{1},\cdots,x_{N-1},f(x_{1},\cdots,x_{N-1})-r_{0}), \ \text{and} \\
\alpha_{r}(x_{1},\cdots,x_{N-1})&:=\emph{Tr}^{+}(B,\Sigma_{r})(x_{1},\cdots,x_{N-1},f(x_{1},\cdots,x_{N-1})-r) .
\end{aligned}
\label{ACM-100}
\end{equation}  
 Then $\alpha_{r} \stackrel{*}{\rightharpoonup} \alpha_{0} $ weakly$^{*}$ in $L^{\infty}(D,\mathcal{L}^{N-1} \llcorner D) $ as $r \rightarrow r^{+}_{0}$. 
 \end{theorem}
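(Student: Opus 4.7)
The plan is to realize both traces $\text{Tr}^{+}(B,\Sigma_{r})$ and $\text{Tr}^{-}(B,\Sigma_{r_{0}})$ as portions of the normal trace of $B$ on the boundary of a single common Lipschitz domain, namely the thin slab between the two graphs, and then to take $r \downarrow r_{0}$ using the definition of normal trace via the Gauss--Green identity \eqref{prel-1}. Concretely, for $r>r_{0}$ close to $r_{0}$, I would introduce the open bounded Lipschitz set
\begin{equation}
U_{r_{0},r}:=\{(x',x_{N})\in\mathbb{R}^{N}\ :\ x'\in D,\ f(x')-r<x_{N}<f(x')-r_{0}\},
\notag
\end{equation}
whose boundary consists of the pieces of $\Sigma_{r_{0}}$ (on top), $\Sigma_{r}$ (on the bottom) and a lateral part lying above $\partial D$. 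The outward unit normal to $\partial U_{r_{0},r}$ coincides with $\vec n_{\Sigma_{r_{0}}}$ on $\Sigma_{r_{0}}$ and with $-\vec n_{\Sigma_{r}}$ on $\Sigma_{r}$, so by property (b) after \eqref{prel-2} one has, at the level of $L^{\infty}$ traces,
\begin{equation}
\mathrm{Tr}(B,\partial U_{r_{0},r})=\mathrm{Tr}^{-}(B,\Sigma_{r_{0}})\ \text{on $\Sigma_{r_{0}}$,}\qquad \mathrm{Tr}(B,\partial U_{r_{0},r})=-\mathrm{Tr}^{+}(B,\Sigma_{r})\ \text{on $\Sigma_{r}$.}
\notag
\end{equation}

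Next, I would apply the definition of normal trace \eqref{prel-1} on $U_{r_{0},r}$ to a test function $\psi\in C_{c}^{\infty}(\mathbb{R}^{N})$ whose support projects into a compact $K\Subset D$, so that $\psi$ vanishes on the lateral part of $\partial U_{r_{0},r}$. Rearranging the resulting identity gives
\begin{equation}
\int_{\Sigma_{r}}\mathrm{Tr}^{+}(B,\Sigma_{r})\,\psi\,d\mathcal{H}^{N-1}=\int_{\Sigma_{r_{0}}}\mathrm{Tr}^{-}(B,\Sigma_{r_{0}})\,\psi\,d\mathcal{H}^{N-1}-\int_{U_{r_{0},r}}\nabla\psi\cdot B\,dy-\int_{U_{r_{0},r}}\psi\,d(\mathrm{Div}\,B).
\notag
\end{equation}
The first volume integral is controlled by $\|\nabla\psi\|_{\infty}\|B\|_{\infty}\mathcal{L}^{N}(U_{r_{0},r})=O(r-r_{0})$ and therefore vanishes as $r\downarrow r_{0}$. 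For the second volume integral I would use that $\{U_{r_{0},r}\}_{r>r_{0}}$ is a monotone family of open sets with empty intersection (the limit surface $\Sigma_{r_{0}}$ lies in the closure but not in $U_{r_{0},r}$), so by continuity from above of the finite Radon measure $|\mathrm{Div}\,B|$ we obtain $|\mathrm{Div}\,B|(U_{r_{0},r})\to 0$. This step, which crucially exploits the one-sidedness of the limit $r\to r_{0}^{+}$, is the heart of the argument: it is exactly what allows possible concentrations of $\mathrm{Div}\,B$ on $\Sigma_{r_{0}}$ to be ignored.

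Having established convergence of the surface pairings against any smooth $\psi$, it remains to translate this into weak-$\!*$ convergence in $L^{\infty}(D)$. Parametrizing $\Sigma_{r}$ by $x'\mapsto(x',f(x')-r)$, the surface integrals become
\begin{equation}
\int_{D}\alpha_{r}(x')\,\psi(x',f(x')-r)\sqrt{1+|\nabla f|^{2}}\,dx'\longrightarrow\int_{D}\alpha_{0}(x')\,\psi(x',f(x')-r_{0})\sqrt{1+|\nabla f|^{2}}\,dx'.
\notag
\end{equation}
Given an arbitrary $\varphi\in C_{c}^{\infty}(D)$ with support $K\Subset D$, I would choose $\eta\in C_{c}^{\infty}(\mathbb{R})$ with $\eta\equiv 1$ on the compact set $\{f(x')-s:x'\in K,\ s\in[r_{0},r_{0}+\delta]\}$ and take $\psi(x',x_{N}):=\varphi(x')\eta(x_{N})$; for $r\in[r_{0},r_{0}+\delta)$ this reduces the above convergence to $\int_{D}\alpha_{r}\varphi\sqrt{1+|\nabla f|^{2}}\,dx'\to\int_{D}\alpha_{0}\varphi\sqrt{1+|\nabla f|^{2}}\,dx'$. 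Combining with the uniform bound $\|\alpha_{r}\|_{L^{\infty}(D)}\leq\|B\|_{L^{\infty}(\Lambda)}$ from property (a) of the normal trace, density of $C_{c}^{\infty}(D)$ in $L^{1}(D)$ yields the same limit for all test functions in $L^{1}(D)$; finally, since the Jacobian $\sqrt{1+|\nabla f|^{2}}$ is bounded and bounded below by $1$, absorbing it into the test function delivers $\alpha_{r}\stackrel{*}{\rightharpoonup}\alpha_{0}$ in $L^{\infty}(D,\mathcal{L}^{N-1}\llcorner D)$, as required.
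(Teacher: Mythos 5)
The paper states this theorem as a result cited from \cite{ACM} and does not supply its own proof, so there is no in-paper argument to compare against; what you have written is a correct reconstruction of the standard argument. Realizing $\mathrm{Tr}^-(B,\Sigma_{r_0})$ and $\mathrm{Tr}^+(B,\Sigma_r)$ as the two pieces of $\mathrm{Tr}(B,\partial U_{r_0,r})$ via property (b), applying~\eqref{prel-1} on the slab, and showing that both bulk terms vanish is exactly the mechanism used in \cite{ACM}; in particular your observation that $\bigcap_{r>r_0}U_{r_0,r}=\emptyset$ together with continuity from above of the finite measure $|\mathrm{Div}\,B|$ restricted to a compact neighborhood is what isolates the role of the one-sided limit and lets a possible concentration of $\mathrm{Div}\,B$ on $\Sigma_{r_0}$ drop out. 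The final step --- parametrizing, choosing the product test function $\varphi(x')\eta(x_N)$ with $\eta\equiv 1$ on the relevant strip so that the pairing becomes $r$-independent, invoking the uniform $L^\infty$ bound from property (a), and absorbing the Lipschitz Jacobian, which is bounded above and below --- is also correct. One small point worth flagging: Definition~\ref{graph} only assumes $D$ is a bounded open set, so $\partial U_{r_0,r}$ need not be globally Lipschitz; your use of test functions supported over a compact $K\Subset D$ resolves this, since one may replace $D$ by a Lipschitz subdomain $D'$ with $K\Subset D'\Subset D$ and run~\eqref{prel-1} on the corresponding slab without affecting any term.
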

We will also need the following result, which was originally established in~\cite{CDS1}.
 \begin{lemma}\label{extension}
  Let $\Lambda \subseteq \mathbb{R}^{N}$ be an open and bounded set with bounded and Lipschitz continuous boundary and let $B$ belong to
  $\mathcal{M}_{\infty}(\Lambda)$. Then the vector field 
  \begin{equation}
  \tilde{B}(z):=
\left\{\begin{array}{ll}
  B(z) & z \in \Lambda \\
    0  & \text{otherwise}
   \end{array}\right.
  \notag
  \end{equation}
belongs to $\mathcal{M}_{\infty}(\mathbb{R}^{N})$.
 \end{lemma}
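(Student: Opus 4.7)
The plan is to compute $\mathrm{Div}\tilde{B}$ directly as a distribution on $\mathbb{R}^N$ and to recognize the result as a sum of bounded Radon measures by invoking the defining identity \eqref{prel-1} of the normal trace. Clearly $\tilde{B}\in L^\infty(\mathbb{R}^N;\mathbb{R}^N)$ with $\|\tilde{B}\|_{L^\infty(\mathbb{R}^N)}=\|B\|_{L^\infty(\Lambda)}$, so the only point to verify is that $\mathrm{Div}\tilde{B}$ is a locally bounded Radon measure on $\mathbb{R}^N$.

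For the key step, I would fix an arbitrary $\psi\in C_c^\infty(\mathbb{R}^N)$. Since $\tilde{B}$ vanishes outside $\Lambda$,
\begin{equation*}
\langle \mathrm{Div}\tilde{B},\psi\rangle \;=\; -\int_{\mathbb{R}^N}\tilde{B}\cdot\nabla\psi\,dz \;=\; -\int_{\Lambda}B\cdot\nabla\psi\,dy.
\end{equation*}
Rearranging the normal-trace identity \eqref{prel-1} gives
\begin{equation*}
-\int_{\Lambda}B\cdot\nabla\psi\,dy \;=\; \int_{\Lambda}\psi\,d(\mathrm{Div}\,B)\;-\;\langle \mathrm{Tr}(B,\partial\Lambda),\psi\rangle,
\end{equation*}
and by property (a) the distribution $\mathrm{Tr}(B,\partial\Lambda)$ is induced by an $L^\infty$ function on $\partial\Lambda$, so that $\langle \mathrm{Tr}(B,\partial\Lambda),\psi\rangle=\int_{\partial\Lambda}\mathrm{Tr}(B,\partial\Lambda)\,\psi\,d\mathcal{H}^{N-1}$. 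Combining these identities one obtains
\begin{equation*}
\mathrm{Div}\,\tilde{B} \;=\; (\mathrm{Div}\,B)\llcorner\Lambda \;-\; \mathrm{Tr}(B,\partial\Lambda)\,\mathcal{H}^{N-1}\llcorner\partial\Lambda
\end{equation*}
as distributions on $\mathbb{R}^N$.

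To finish, I would note that the right-hand side is a bounded Radon measure: the first term is bounded because $\mathrm{Div}\,B$ is a bounded Radon measure on $\Lambda$, while the second is bounded because $\mathrm{Tr}(B,\partial\Lambda)\in L^\infty(\partial\Lambda)$ and $\mathcal{H}^{N-1}(\partial\Lambda)<\infty$, the boundary $\partial\Lambda$ being a bounded Lipschitz hypersurface. This yields $\tilde{B}\in\mathcal{M}_\infty(\mathbb{R}^N)$. The only conceptually delicate point in the argument is the need to upgrade the a priori distributional trace to a genuine $L^\infty$ function on $\partial\Lambda$, which is the content of property (a) imported from \cite{ACM}; without it, the boundary contribution could not be interpreted as a measure and the conclusion would fail. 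Everything else is a routine unpacking of the Gauss--Green identity.
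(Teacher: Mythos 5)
The paper does not prove this lemma; it cites \cite{CDS1} for it, so there is no in-paper proof to compare against. Your argument is correct: once property (a) of the trace distribution (that it is induced by an $L^\infty$ function on $\partial\Lambda$) is granted, the claim follows at once from the defining identity \eqref{prel-1}, and the resulting formula $\mathrm{Div}\,\tilde{B} = (\mathrm{Div}\,B)\llcorner\Lambda - \mathrm{Tr}(B,\partial\Lambda)\,\mathcal{H}^{N-1}\llcorner\partial\Lambda$ is exactly right; you are also appropriately explicit that property (a) is carrying the entire weight. It is worth knowing that the lemma admits a more elementary proof that makes no appeal to the trace theory at all: set $\Lambda_\epsilon := \{x\in\Lambda : \mathrm{dist}(x,\partial\Lambda)>\epsilon\}$, take Lipschitz cutoffs $\chi_\epsilon$ with $\chi_\epsilon\equiv 1$ on $\Lambda_\epsilon$, $\chi_\epsilon\equiv 0$ off $\Lambda$, and $|\nabla\chi_\epsilon|\le C/\epsilon$; for $\psi\in C_c^\infty(\mathbb{R}^N)$ split $-\int_\Lambda B\cdot\nabla\psi = -\int_\Lambda B\cdot\nabla(\chi_\epsilon\psi) -\int_\Lambda B\cdot\nabla\big((1-\chi_\epsilon)\psi\big)$, recognize the first term as $\int_\Lambda\chi_\epsilon\psi\, d(\mathrm{Div}\,B)$, and bound the second using $\|B\|_\infty$, the uniform-in-$\epsilon$ estimate $\|\nabla\chi_\epsilon\|_{L^1(\Lambda)}\le C_\Lambda$ (from Lipschitz regularity of $\partial\Lambda$), and $\mathcal{L}^N(\Lambda\setminus\Lambda_\epsilon)\to 0$; letting $\epsilon\to 0$ gives $|\langle\mathrm{Div}\,\tilde{B},\psi\rangle| \le \big(|\mathrm{Div}\,B|(\Lambda)+C_\Lambda\|B\|_\infty\big)\|\psi\|_\infty$ directly. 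The direct route is logically lighter — it would remain valid even in a development of the theory where the extension lemma is used as an ingredient in establishing property (a) — while your route is shorter and gives the exact interior-plus-boundary decomposition of $\mathrm{Div}\,\tilde{B}$ for free. Given the present paper's presentation, where property (a) is simply imported from \cite{ACM}, your proof is acceptable.
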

We conclude by recalling some results concerning traces of $BV$ functions and we refer to~\cite[\S 3]{AFP} for a more extended discussion. 
\begin{theorem}
\label{bv-trace}
Let $\Lambda \subseteq \mathbb{R}^{N}$ be an open and bounded set with bounded and Lipschitz continuous boundary. There exists a bounded linear mapping
\begin{equation}
T: BV(\Lambda) \rightarrow L^{1}(\partial \Lambda;\mathcal{H}^{N-1}) 
\label{ACM-101}
\end{equation}
such that $T (f) = f_{|_{\partial \Lambda}}$ if $f$ is continuous up to the boundary. Also,
\begin{equation}
\int_{\Lambda} \nabla \psi \cdot f \ dy = - \int_{\Lambda} \psi \ d(\emph{\text{Div}} f) + \int_{\partial \Lambda} \psi  \ Tf \cdot \vec n \ d\mathcal{H}^{N-1},
\label{ACM-102}
\end{equation}
for all $f \in BV(\Lambda)$ and $\psi \in C^{\infty}_{c}(\mathbb{R}^{N})$. In the above expression, $\vec n$ denotes the outward pointing, unit normal vector to $\partial \Lambda$.
\end{theorem}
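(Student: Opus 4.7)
The plan is to localize at the boundary, reduce to a model half-space setting, and then construct the trace by a slicing argument. First I would use the Lipschitz regularity of $\partial\Lambda$ to cover it by finitely many charts flattening the boundary onto pieces of $\mathbb{R}^{N-1}\times\{0\}$, and employ a subordinate partition of unity together with the bi-Lipschitz invariance of $BV$ to reduce the statement to the model set $Q:=B\times(0,\delta)$, where $B\subseteq\mathbb{R}^{N-1}$ is a ball and the relevant portion of $\partial\Lambda$ corresponds to $B\times\{0\}$.

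For $f\in BV(Q)$ I would then define the trace $Tf$ as the $L^1(B)$-limit of the horizontal slices $f_t(x'):=f(x',t)$ as $t\to 0^+$. The core estimate is that, for a.e.\ $0<s<t<\delta$,
\[
\|f_t-f_s\|_{L^1(B)}\leq |Df|(B\times(s,t)),
\]
which follows from the Fubini-type characterization of $BV$ functions (see~\cite[Sec.~3]{AFP}): for a.e.\ $x'\in B$ the one-dimensional section $t\mapsto f(x',t)$ lies in $BV(0,\delta)$, and integrating the one-dimensional total variations in $x'$ bounds the $L^1$-distance between slices. Consequently $\{f_t\}_{t>0}$ is Cauchy in $L^1(B)$ along any sequence $t\to 0^+$, and the resulting limit $Tf$ is independent of the chosen sequence. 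The bound $\|Tf\|_{L^1(B)}\leq C\|f\|_{BV(Q)}$ is then obtained by combining $\|Tf\|_{L^1(B)}\leq\|f_t\|_{L^1(B)}+|Df|(B\times(0,t))$ with an averaging over $t\in(0,\delta/2)$, which furnishes the required linear, bounded operator in \eqref{ACM-101}.

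Consistency $T(f)=f|_{\partial\Lambda}$ when $f$ is continuous up to the boundary is immediate, since in that case $f_t\to f|_{\partial\Lambda}$ uniformly on $B$ and hence in $L^1$. The Gauss--Green identity \eqref{ACM-102} is obtained by a density argument: one approximates $f\in BV(\Lambda)$ by a sequence $f_n\in C^\infty(\overline\Lambda)$ with $f_n\to f$ in $L^1(\Lambda)$, $|Df_n|(\Lambda)\to|Df|(\Lambda)$ (strict convergence), and $Tf_n\to Tf$ in $L^1(\partial\Lambda)$; for each $f_n$ the classical divergence theorem applies, and passing to the limit in both the volume and the boundary integrals yields \eqref{ACM-102}. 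The main technical hurdle is the slicing estimate and the ensuing identification of $Tf$ as a genuine $L^1$-limit of one-sided slices; a secondary difficulty is checking that the local, chart-based construction glues into a well-defined global trace independent of the partition of unity and of the charts, which however follows from the uniqueness of the limit in $L^1$ and the localization of the estimates.
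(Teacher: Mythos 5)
This statement is quoted in the paper as a preliminary result, with the reader referred to~\cite[\S 3]{AFP}; the paper gives no proof of its own, so there is no internal argument to compare against. Your proposal reproduces the standard textbook construction (essentially~\cite[\S 3.8]{AFP}, going back to Gagliardo and Anzellotti--Giaquinta): flatten the boundary with Lipschitz charts and a partition of unity, define the trace on the model half-cylinder $B\times(0,\delta)$ as the $L^1(B)$-limit of the horizontal slices $f_t$, control $\|f_t-f_s\|_{L^1(B)}$ by $|Df|(B\times(s,t))$ via the one-dimensional sections, and obtain the Gauss--Green identity by strictly converging smooth approximations whose traces converge in $L^1(\partial\Lambda)$. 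This is the right proof and the key estimate is stated correctly with the necessary ``for a.e.\ $s<t$'' qualification.

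Two points worth flagging, neither fatal. First, the Gauss--Green step invokes exactly the Anzellotti--Giaquinta approximation theorem which the paper records separately as Theorem~\ref{t:traceafp}; in most treatments this approximation result is proved \emph{together with} the trace theorem (the Meyers--Serrin mollification is first shown to converge to the slicing trace, and the smooth Gauss--Green then passes to the limit), so to avoid an apparent circularity you should either prove the trace convergence of the mollified sequence from the slicing estimate, or verify the identity on the model cylinder directly by one-dimensional integration by parts in the slices before gluing. Second, the gluing step carries more weight than ``uniqueness of the $L^1$ limit'' alone: the bi-Lipschitz chart changes $\mathcal{H}^{N-1}$ by a Jacobian cofactor and rotates the normal, so one must check that the local slicing traces, once pulled back, agree on chart overlaps and assemble into the global Gauss--Green formula with the correct $\vec n$; this is routine for Lipschitz boundaries (Rademacher gives $\vec n$ a.e.) but deserves an explicit mention.
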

By comparing~\eqref{prel-1} and~\eqref{ACM-102} we conclude that
\begin{equation}
\label{e:equal}
        \mathrm{Tr} (f, \partial \Lambda) = T (f) \cdot \vec n, \quad \text{for every $f \in BV (\Lambda)$}.
\end{equation}
By combining Theorems~3.9 and~3.88 in~\cite{AFP} we get the following result.
\begin{theorem}[\cite{AFP}]
\label{t:traceafp}
Assume $\Lambda \subseteq \R^N$ is an open set with bounded and Lipschitz continuous  boundary. If $f \in BV (\Lambda; \R^m)$, then there is a sequence $\{\tilde f_m \} \subseteq C^\infty (\Lambda)$ such that
\begin{equation}
\label{e:tracefp}
    \tilde f_m \to f \;  \text{ strongly in $L^1 (\Lambda; \R^m)$}, 
    \qquad 
    T (\tilde f_m) \to T(f)  \text{ strongly in $L^1 (\partial \Lambda; \R^m)$}.  
\end{equation} 
Also, we can choose $\tilde f_m$ in such a way that 
\begin{itemize}
\item
$\tilde f_m \ge 0$ if $f \ge 0$,
\item if $f \in L^\infty (\Lambda; \R^m)$, then 
\begin{equation}
\label{e:four}
   \| \tilde f_m \|_{L^\infty} \leq  4 \| f \|_{L^\infty}. 
\end{equation}
\end{itemize}
\end{theorem}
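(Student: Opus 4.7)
My plan is to combine in a direct way the two classical results from~\cite{AFP} mentioned in the statement, and then trace through the explicit construction to verify the extra positivity and $L^\infty$ properties.

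First, I would invoke Theorem~3.9 of~\cite{AFP} to produce a sequence $\tilde f_m \in C^\infty(\Lambda; \R^m) \cap BV(\Lambda; \R^m)$ with $\tilde f_m \to f$ in $L^1(\Lambda; \R^m)$ and $|D\tilde f_m|(\Lambda) \to |Df|(\Lambda)$, that is, strict convergence in $BV$. Next, I would apply Theorem~3.88 of~\cite{AFP}, which asserts that the trace operator $T \colon BV(\Lambda; \R^m) \to L^1(\partial\Lambda; \R^m)$ is continuous with respect to strict convergence. This immediately yields $T(\tilde f_m) \to T(f)$ strongly in $L^1(\partial\Lambda; \R^m)$ and therefore~\eqref{e:tracefp}.

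For the additional structural properties, I would re-examine the explicit construction used in the proof of Theorem~3.9 of~\cite{AFP}. The approximants there take the form $\tilde f_m = \sum_i \rho_{\epsilon_i^m} * (f\zeta_i)$, where $\{\zeta_i\}$ is a smooth, non-negative partition of unity subordinate to a locally finite open cover $\{A_i\}$ of $\Lambda$ and the $\rho_{\epsilon_i^m}$ are standard non-negative mollifiers whose scales are chosen small enough that each $\rho_{\epsilon_i^m} * (f\zeta_i)$ remains supported in $\Lambda$. Since mollifiers and $\zeta_i$ are non-negative, each summand inherits the sign of $f$; thus $f \geq 0$ forces $\tilde f_m \geq 0$, as required. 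This part does not introduce any new analytic ingredient, only a check that the standard construction respects positivity.

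The bound~\eqref{e:four} comes from a pointwise multiplicity argument: at each $x \in \Lambda$ one has $|\tilde f_m(x)| \leq \|f\|_{L^\infty} \sum_i (\rho_{\epsilon_i^m} * \zeta_i)(x)$, and each $\rho_{\epsilon_i^m} * \zeta_i$ is pointwise bounded by $1$. If the cover $\{A_i\}$, enlarged to account for the mollification radii, has overlap multiplicity at most $4$, then the sum contains at most four non-zero terms at each $x$, yielding~\eqref{e:four}. This combinatorial control of the cover is the only genuinely delicate point of the argument, and it is a standard geometric exercise for Lipschitz domains, easily achieved for instance via the ``ring'' decomposition $A_k = \Omega_{k+1} \setminus \overline{\Omega_{k-1}}$ of $\Lambda$ together with a choice of mollification scale $\epsilon_i^m$ small compared to the width of the rings.
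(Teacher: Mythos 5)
Your proposal is correct and follows essentially the same route as the paper: both rely on Theorems 3.9 and 3.88 of \cite{AFP} for the $L^1$ and trace convergence, and both trace through the Meyers--Serrin construction $\tilde f_m = \sum_h (\rho_{\epsilon_h^m} \ast (f\zeta_h))$ with a partition of unity subordinate to a cover of bounded multiplicity $4$ to extract the positivity and the $L^\infty$ bound~\eqref{e:four}. The only cosmetic difference is that the paper simply postulates the multiplicity-$4$ covering (\S~\ref{s:proof1}, property iii) rather than deriving it from the ring decomposition you sketch; note in passing that the standard rings $\Omega_{k+1}\setminus\overline{\Omega_{k-1}}$ actually give multiplicity $3$, so the factor $4$ in the paper is a conservative (and harmless) margin.
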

A sketch of the proof of Theorem~\ref{t:traceafp} is provided in~\S~\ref{s:proof1}. 
\section{Distributional formulation of the problem}
\label{s:formu}
In this section, we follow~\cite{Boyer,CDS1} and we provide the distributional formulation of the problem \eqref{prob-2}. We first establish a preliminary result. 
 \begin{lemma}\label{trace-existence}
 Let $\Omega \subseteq \mathbb{R}^{d}$ be an open bounded set with $C^2$ boundary  
and let $T > 0 $. We assume that
 $b \in L^{\infty}((0,T) \times \Omega; \mathbb{R}^{d}) $ is a nearly incompressible vector field with density $\rho \in L^{\infty}((0,T) \times \Omega) $, see Definition \ref{near-incom}. If $u \in L^{\infty}((0,T) \times \Omega)$ satisfies
 \begin{equation}
 \int_{0}^{T} \int_{\Omega} \rho u (\partial_{t} \phi+b \cdot \nabla \phi) \ dx dt= 0, \quad \forall \ \phi \in \mathcal{C}^{\infty}_{c} ((0,T) \times \Omega),    
  \label{iden-2}
 \end{equation}
 then there are two unique functions, which we henceforth denote by $\emph{Tr}(\rho u b) \in L^{\infty}((0,T) \times \partial \Omega) $ and $(\rho u)_{0} \in L^{\infty}(\Omega)$, that satisfy
\begin{equation}
\int_{0}^{T} \! \!  \int_{\Omega} \rho u (\partial_{t} \psi+ b \cdot \nabla \psi) \ dx dt= \int_{0}^{T} \! \! \int_{\partial \Omega} \emph{Tr}(\rho u b) \psi \ 
d\mathcal{H}^{d-1}\ dt - \int_{\Omega} \psi(0,\cdot) (\rho u)_{0}\ dx, \quad  \forall \psi \in  \mathcal{C}^{\infty}_{c} ([0,T) \times \mathbb{R}^{d}).
 \label{iden-3}
\end{equation}
Also, we have the bounds
\begin{equation}
\label{e:maxprintraces2}
      \| \emph{Tr}(\rho u b) \|_{L^\infty((0,T) \times \partial \Omega) } , \;
     \|  (\rho u)_{0} \|_{ L^{\infty}(\Omega)}
      \leq  \max\{ \| \rho u  \|_{L^\infty((0,T) \times  \Omega) } ; \| \rho u b \|_{L^\infty((0,T) \times  \Omega) } \}.
\end{equation}
\end{lemma}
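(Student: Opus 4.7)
The plan is to reformulate the problem in terms of the single measure-divergence vector field $B := (\rho u, \rho u b)$ on $\Lambda := (0,T) \times \Omega \subseteq \mathbb{R}^{d+1}$ and then read off the two desired traces from the normal trace of $B$ on $\partial \Lambda$, using the theory recalled in \S\ref{s:prel}. First, I would observe that $B \in L^\infty(\Lambda; \mathbb{R}^{d+1})$ and, by hypothesis \eqref{iden-2}, its full space-time divergence $\mathrm{Div} B = \partial_t (\rho u) + \mathrm{div}(\rho u b)$ vanishes as a distribution on $\Lambda$. Hence $B \in \mathcal{M}_\infty(\Lambda)$, and since $\partial\Omega$ is $C^2$ the set $\Lambda$ has Lipschitz boundary, so Definition~\ref{graph}-type theory applies and property (a) yields an $L^\infty$ function $\mathrm{Tr}(B, \partial \Lambda)$ on $\partial \Lambda$ satisfying $\|\mathrm{Tr}(B,\partial \Lambda)\|_{L^\infty(\partial \Lambda)} \le \|B\|_{L^\infty(\Lambda)} = \max\{\|\rho u\|_{L^\infty}, \|\rho u b\|_{L^\infty}\}$.

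Next, I would decompose $\partial \Lambda$ (up to an $\mathcal{H}^d$-negligible set) into the bottom $\Sigma_0 = \{0\}\times \Omega$, the top $\Sigma_T = \{T\}\times \Omega$, and the lateral part $\Sigma_{\mathrm{lat}} = (0,T) \times \partial \Omega$. The outward unit normal to $\Lambda$ is $(-1,\vec 0)$ on $\Sigma_0$, $(+1,\vec 0)$ on $\Sigma_T$, and $(0,\vec n)$ on $\Sigma_{\mathrm{lat}}$. Accordingly I would set
\begin{equation*}
(\rho u)_0(x) := -\mathrm{Tr}(B,\partial \Lambda)(0,x) \quad \text{on } \Omega, \qquad \mathrm{Tr}(\rho u b)(t,x) := \mathrm{Tr}(B,\partial \Lambda)(t,x) \quad \text{on } (0,T) \times \partial \Omega.
\end{equation*}
The bound \eqref{e:maxprintraces2} is then immediate from the $L^\infty$ bound on $\mathrm{Tr}(B,\partial\Lambda)$.

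To recover \eqref{iden-3}, I would apply the defining identity \eqref{prel-1} for the normal trace to a test function $\psi \in \mathcal{C}^\infty_c([0,T) \times \mathbb{R}^d)$. Strictly speaking \eqref{prel-1} is stated for $\psi \in C^\infty_c(\mathbb{R}^{d+1})$, so I would first extend $\psi$ smoothly across $\{t=0\}$ and truncate to obtain a compactly supported smooth function on $\mathbb{R}^{d+1}$ agreeing with $\psi$ on $\bar \Lambda$; the values outside $\bar \Lambda$ are irrelevant since the right-hand side of \eqref{prel-1} only sees $\psi$ on $\Lambda$. Using $\mathrm{Div} B = 0$ on $\Lambda$ and the fact that $\psi$ vanishes near $\Sigma_T$, the boundary integral in \eqref{prel-1} splits as an integral over $\Sigma_0$ plus one over $\Sigma_{\mathrm{lat}}$, which after substituting the definitions above reproduces \eqref{iden-3}.

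Finally, for uniqueness, I would take the difference of two pairs $(\mathrm{Tr}_i, (\rho u)_{0,i})$ satisfying \eqref{iden-3}; the interior term cancels and yields an identity involving only the boundary data. Testing first with $\psi \in \mathcal{C}^\infty_c([0,T) \times \Omega)$ (which vanishes on $(0,T)\times \partial \Omega$ but with arbitrary $\psi(0,\cdot) \in \mathcal{C}^\infty_c(\Omega)$) forces $(\rho u)_{0,1} = (\rho u)_{0,2}$ a.e.\ on $\Omega$; testing then with general $\psi \in \mathcal{C}^\infty_c([0,T) \times \mathbb{R}^d)$ forces $\mathrm{Tr}_1 = \mathrm{Tr}_2$ a.e.\ on $(0,T) \times \partial \Omega$. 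The main technical point I expect to need care is the smooth extension of $\psi$ across $t=0$ to justify the use of \eqref{prel-1}, but this is a routine cutoff-and-extension argument once the $\mathcal{M}_\infty(\Lambda)$ framework is in place.
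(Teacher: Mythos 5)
Your proposal is correct and follows essentially the same argument as the paper: package $(\rho u, \rho u b)$ as a single bounded, divergence-free (by~\eqref{iden-2}) vector field $B$ on $\Lambda=(0,T)\times\Omega$, apply the normal-trace theory for $\mathcal{M}_\infty$ vector fields, and read off $(\rho u)_0$ and $\mathrm{Tr}(\rho u b)$ from $\mathrm{Tr}(B,\partial\Lambda)$ on the bottom and lateral faces of $\partial\Lambda$. The only cosmetic differences are that the paper first extends $B$ by zero to all of $\mathbb{R}^{d+1}$ via Lemma~\ref{extension} whereas you work intrinsically in $\Lambda$, and that you spell out the uniqueness argument that the paper dismisses as following from the freedom in choosing the test functions $\psi$.
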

\begin{proof}
First of all, let us note that the uniqueness of such functions follow from the liberty in choosing the test functions $\psi$. Therefore
it is enough to discuss the existence of the functions with the above properties.
 Let us define 
 \begin{equation}
 B(t,x):= \left\{
      \begin{array}{ll}
           (u \rho, u \rho b) & (t,x) \in (0,T)\times \Omega \\
           0 &\text{elsewhere in}\ \mathbb{R}^{d+1}.\\
          \end{array} \right. 
  \label{e:extend}
 \end{equation}
Then $B \in L^{\infty}(\mathbb{R}^{d+1})$ and from \eqref{iden-2}, it also follows that $\big[\text{Div} B \llcorner {(0,T) \times \Omega} \big]=0 $.
We can now apply Lemma \ref{extension} with $\Lambda= (0,T) \times \Omega $ to conclude that $B \in \mathcal{M}_{\infty}(\mathbb{R}^{d+1}).$
Hence $B$ induces the existence of normal trace on $\partial \Lambda$. 
Let 
\begin{equation}
 \text{Tr}(\rho u b):= \text{Tr} (B,\partial \Lambda) \Big\vert_{(0,T) \times \partial \Omega}, \ \ 
 (\rho u)_{0}:= -\text{Tr}(B,\partial \Lambda) \Big\vert_{\{0 \} \times \Omega}.  
 \notag
 \end{equation}
 The identity \eqref{iden-3} then follows from \eqref{prel-1} by virtue of the fact that $\text{Div}B=0 \ \text{in}\ (0,T)\times \Omega $.
\end{proof}
\begin{remark}
\label{r}
We define the vector field $P:=(\rho,\rho b) $ and we point out that $P \in {L^{\infty}((0,T) \times \Omega; \mathbb{R}^{d+1})}$   since 
 $\rho$ and $b$ are both bounded functions. By introducing the same extension as in~\eqref{e:extend} and using the fact that
 \begin{equation}
 \int_{0}^{T} \int_{\Omega} \rho  (\partial_{t} \phi+b \cdot \nabla \phi) \ dx dt= 0, \quad \forall \ \phi \in \mathcal{C}^{\infty}_{c} ((0,T) \times \Omega),    
  \notag
 \end{equation}
 we can argue as in the proof of the above lemma to establish the existence of unique functions $\emph{Tr}(\rho  b) \in L^{\infty}((0,T) \times \partial \Omega) $ and 
 $\rho_0 \in L^\infty(\Omega)$ defined as 
 $$
    \emph{Tr}(\rho b):= \emph{Tr}(P, \partial \Lambda) \Big\vert_{(0,T) 
    \times \partial      \Omega}, \quad 
    \rho_0 : =  - \emph{Tr}(P, \partial \Lambda) \Big\vert_{\{ 0 \} \times 
          \Omega}.
 $$ 
 In this way, we can give a meaning to the normal trace $\mathrm{Tr} (\rho b)$ and to the initial datum $\rho_0$. Also, we have the bounds
 \begin{equation}
\label{e:maxprintraces1}
      \| \emph{Tr}(\rho b) \|_{L^\infty((0,T) \times \partial \Omega) } , \;
     \|  \rho_{0} \|_{ L^{\infty}(\Omega)}
      \leq  \max\{ \| \rho  \|_{L^\infty((0,T) \times  \Omega) } ; \| \rho  b \|_{L^\infty((0,T) \times  \Omega) }\}.
\end{equation}
\end{remark}
We can now introduce the distributional formulation to the problem \eqref{prob-2} by using Lemma \ref{trace-existence}.
We introduce the following notation:   
 \begin{equation}
 \left.
 \begin{array}{ll}
 \Gamma: = (0, T) \times \partial \Omega, 
  &  \Gamma^{-}:= \{(t,x) \in \Gamma: \ \text{Tr} (\rho b)(t,x)<0 \},\\
       \Gamma^{+}:=\{(t,x) \in \Gamma: \ \text{Tr} (\rho b)(t,x) > 0 \}, &
      \Gamma^0:=\{(t,x) \in \Gamma: \ \text{Tr} (\rho b)(t,x) = 0 \}.  \\
      \end{array}
      \right.
 \label{e:gamma}
 \end{equation}
\begin{definition}
\label{d:distrsol}
 Let $\Omega \subseteq \mathbb{R}^{d}$ be an open bounded set with $C^2$ boundary  and let $T > 0 $. Let  $b \in 
 L^{\infty}((0,T) \times \Omega; \mathbb{R}^{d}) $ be a nearly incompressible vector field
  with density $\rho $, see Definition~\ref{near-incom}. Fix $\overline{u} \in L^\infty (\Omega)$ and $\overline{g} \in L^\infty (\Gamma^-)$. 
 We say that a function $u \in L^{\infty}((0,T)\times \Omega)$ is a distributional solution of \eqref{prob-2} if  the following conditions are satisfied: 
 \begin{itemize}
 \item[i.] $u$ satisfies \eqref{iden-2};
 \item[ii.] $(\rho u)_{0}= \overline{u} \rho_0 $; 
 \item[iii.] $\emph{Tr}(\rho u b)= \overline{g} \emph{Tr}(\rho b) $ on the set $\Gamma^{-}$.
 \end{itemize} 
 \end{definition}
 \section{Proof of Theorem~\ref{IBVP-NC}: existence of solution}
 \label{s:exi}
 In this section we establish the existence part of Theorem~\ref{IBVP-NC}, namely we prove the  existence of functions $u \in L^{\infty}((0,T)\times \Omega) $ and $w \in L^{\infty}(\Gamma^{0}\cup \Gamma^+ ) $ such that for every $\psi \in C^{\infty}_{c}([0,T)\times \mathbb{R}^{d})$,
 \begin{equation}
 \int_{0}^{T} \int_{\Omega} \rho u (\partial_{t} \psi+b \cdot \nabla \psi) \ dx dt= \int_{\Gamma^{-}} \overline{g} \text{Tr}(\rho b) \psi \ d\mathcal{H}^{d-1} dt +\int_{\Gamma^{+}\cup \Gamma^0}  \! \!\text{Tr}(\rho b) \psi w \ d\mathcal{H}^{d-1} dt-\int_{\Omega} \rho_0
 \ \overline{u}\ \psi(0,\cdot)\ dx .
 \label{weak-exist1}
 \end{equation}
We proceed as follows: first, in~\S~\ref{ss:as} we introduce an approximation scheme. Next, in~\S~\ref{ss:limit} we pass to the limit and establish existence.
 \subsection{Approximation scheme}
 \label{ss:as}
In this section we rely on the analysis in~\cite[\S~3.3]{Delellis1}, but we employ a more refined approximation scheme which guarantees strong convergence of the traces. 

We set $\Lambda: =(0, T) \times \Omega$ and we recall that by assumption $\rho \in BV(\Lambda) \cap L^\infty (\Lambda).$ We apply Theorem~\ref{t:traceafp} and we select a sequence $\{ \tilde \rho_m \} 
\subseteq C^\infty (\Lambda)$ satisfying~\eqref{e:tracefp} and~\eqref{e:four}. Next, we set 
\begin{equation}
\label{e:rhoenne}
        \rho_m: = \frac{1}{m} + \tilde \rho_m \ge \frac{1}{m}.  
\end{equation}
We then apply Theorem~\ref{t:traceafp}  to the function $b \rho$ and we set 
\begin{equation}
\label{e:benne}
        b_m : = \frac{\widetilde{(b \rho)}_m}{\rho_m}. 
\end{equation}
Owing to Theorem~\ref{t:traceafp} we have 
\begin{equation}
\label{e:elle1conv}
      \rho_m \to \rho \; 
      \text{strongly in $L^1 ((0, T) \times \Omega)$}, \quad 
      b_m \rho_m \to b \rho
      \; 
      \text{strongly in $L^1 ((0, T) \times \Omega;\R^d)$}. 
\end{equation}
and, by using  the identity~\eqref{e:equal}, 
\begin{equation}
\label{e:traceconv}
\begin{split}
         \mathrm{Tr} (\rho_m) \to \mathrm{Tr} (\rho)& \; \text{strongly in $L^1 (\Gamma)$},  \quad 
         \mathrm{Tr} (\rho_mb_m) \to \mathrm{Tr} (\rho b) \; 
         \text{strongly in $L^1 (\Gamma)$}, \\
         & \quad 
         \rho_{m0} \to \rho_0  \; 
         \text{strongly in $L^1 (\Omega)$}. 
         \end{split}
\end{equation}
Note, furthermore, that 
\begin{equation}
\label{e:linftytraces}
     \| \mathrm{Tr} (b_m \rho_m ) \|_{L^\infty} \stackrel{\eqref{e:maxprintraces1}}{\leq}       \| b_m \rho_m  \|_{L^\infty}
     \stackrel{\eqref{e:four}}{\leq} 4 \| b \rho   \|_{L^\infty}. 
\end{equation}
In the following, we will use the notation
\begin{equation}
\label{e:gammadef}
  \Gamma_m^- : = \big\{(t, x) \in \Gamma: \; \mathrm{Tr} (\rho_m b_m)
  < 0 \big\}, 
  \qquad  
  \Gamma_m^+ : = \big\{(t, x) \in \Gamma: \; \mathrm{Tr} (\rho_m b_m) > 0 \big\}
\end{equation}
Finally, we extend the function $\overline{g}$ to the whole $\Gamma$ by setting it equal to $0$ outside $\Gamma^-$ and we construct two sequences $\{ \overline{g}_m \} \subseteq C^1 (\Gamma)$ and 
$\{\overline{u}_m \} \subseteq C^\infty (\Omega)$ such that 
\begin{equation}
\label{e:convbdata}
      \overline{g}_m \to \overline{g} \; \text{strongly in $L^1 (\Gamma)$}, \quad 
      \overline{u}_m \to \overline{u} \; \text{strongly in $L^1 (\Omega)$}
\end{equation} 
and 
\begin{equation}
\label{e:tomaxprin}
    \| \overline{g}_m \|_{L^\infty} \leq \| \overline{g} \|_{L^\infty}, \quad 
    \| \overline{u}_m \|_{L^\infty} \leq \| \overline{u} \|_{L^\infty}. 
\end{equation}
We can now define the function $u_m$ as the solution of the initial-boundary value problem  
 \begin{equation}
 \left\{
 \begin{array}{ll}
 \partial_{t} u_m+b_m \cdot \nabla u_m=0 & \text{on $(0, T) \times \Omega$} \\
 u_m=\overline{u}_m & \text{at $t=0$}\\ 
 u_m= \overline{g}_m  & \text{on} \; \tilde \Gamma^{-}_m,
 \end{array}
 \right.
 \label{exist3}
 \end{equation}
where $\tilde \Gamma^-_m$ is the subset of $\Gamma$ such that the characteristic lines of $b_m$ starting at a point in $\tilde \Gamma^-_m$ are entering $(0, T) \times \Omega$. We recall~\eqref{e:gammadef} and we point out that  
$$ 
   \Gamma^-_m 
  \subseteq
  \tilde \Gamma^-_m \subseteq 
    \big\{ (t, x) \in \Gamma: 
  \; b_m \cdot \vec n \leq 0 \big\}. 
$$
In the previous expression, $\vec n$ denotes as the outward pointing, unit normal vector to $\partial \Omega$. By using the classical method of characteristics (see also~\cite{Bar}) we establish the existence of a solution $u_m$ satisfying 
 \begin{equation}
 \Vert u_m\Vert_{\infty} \leq \max\{\Vert \overline{u}_m \Vert_{\infty}, \Vert \overline{g}_m \Vert_{\infty} \} 
 \stackrel{\eqref{e:tomaxprin}}{\leq} \max \{\Vert \overline{u} \Vert_{\infty}, \Vert \overline{g} \Vert_{\infty} \}.
 \label{mp}
 \end{equation}
We now introduce the function $h_m$ by setting   
  \begin{equation}
  \label{e:accaenne}
      h_m : = \partial_t \rho_m + \mathrm{div} (b_m \rho_m)
  \end{equation}
and by using  the equation at the first line of~\eqref{exist3} we get that 
$$
    \partial_t (\rho_m u_m ) + \mathrm{div} (b_m \rho_m u_m ) = h_m u_m. 
$$
Owing to the Gauss-Green formula, this implies that,
for every $\psi \in C^\infty_c ([0, T) \times \R^d)$, 
\begin{equation}
 \begin{aligned}
 &\int_{0}^{T} \int_{\Omega} \rho_m u_m [\partial_{t} \psi+ b_m \cdot \nabla \psi ] \ dx dt
 + \int_0^T \int_\Omega h_m u_m \psi \, dx dt 
  \\  
 &\quad = -\int_{\Omega} \psi(0,x) \overline{\rho}_{m0} \overline{u}_{m}  \ dx- \int_{0}^{T} \! \! \int_{\partial \Omega} 
\psi u_m \rho_m b_m \cdot \vec n \, d\mathcal{H}^{d-1} dt \\
 & \quad = 
  -\int_{\Omega} \psi(0,x) \overline{\rho}_{m0} \overline{u}_{m}  \ dx- 
  \int_{0}^{T} \! \! \int_{\partial \Omega} \mathbf{1}_{\Gamma_m^-}
 \overline{g}_{m} \psi \mathrm{Tr} (\rho_m b_m ) d\mathcal{H}^{d-1} dt 
 - \int_{0}^{T} \! \! \int_{\partial \Omega} \mathbf{1}_{\Gamma_m^+}
u_m \psi \mathrm{Tr} (\rho_m b_m ) d\mathcal{H}^{d-1} dt.
 \end{aligned}
 \label{weak-exist2}
 \end{equation}
In the above expression, we have used the notation introduced in~\eqref{e:gammadef} and the fact that 
$\mathrm{Tr} (\rho_m b_m )=0$ on~${\Gamma \setminus (\Gamma^-_m \cup \Gamma^+_m)}$.  
 
\subsection{Passage to the limit}
\label{ss:limit}
Owing to the uniform bound~\eqref{mp}, there are a subsequence of $\{ u_m \}$ (which, to simplify notation, we do not relabel) and a function $u \in L^\infty ((0, T) \times \Omega$
such that
 \begin{equation}
 \label{e:uweaks}
       u_m \weaks u \; \text{weakly$^\ast$ in $L^\infty ((0, T) \times \Omega)$. }
 \end{equation}
The goal of this paragraph is to show that the function $u$ in~\eqref{e:uweaks} is a distributional solution of~\eqref{prob-2} by passing to the limit in~\eqref{weak-exist2}. 
We first introduce a technical lemma. 
\begin{lemma}
\label{l:meyerserrin}
We can construct the approximating sequences $\{ \rho_m \}$ and $\{ b_m \}$ 
in such a way that the sequence $\{ h_m \}$ defined as in~\eqref{e:accaenne} satisfies 
\begin{equation}
\label{e:convaccaemme}
     h_m \to 0 \; \text{strongly in $L^1 ((0, T) \times \Omega)$}. 
\end{equation} 
\end{lemma}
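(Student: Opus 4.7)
The plan is to run a single Meyers--Serrin-type approximation scheme for $\rho$ and $b\rho$ simultaneously, with a shared partition of unity and shared mollifier scales, and to exploit the near-incompressibility identity $\partial_t \rho + \mathrm{div}(b\rho) = 0$ to make the leading terms in $h_m$ cancel. Whatever remains will be a commutator-like expression that can be forced to be arbitrarily small in $L^1$ by tuning the mollification parameters.

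Concretely, the construction used in the proof of Theorem~\ref{t:traceafp} (sketched in \S~\ref{s:proof1}) writes the approximation of a BV function $f$ on $\Lambda := (0,T) \times \Omega$ in the form $\tilde f_m = \sum_k (\zeta_k f) * \eta_{\epsilon_{k,m}}$, where $\{\zeta_k\}$ is a locally finite $C^\infty$ partition of unity on $\Lambda$ subordinate to an open cover by relatively compact subsets, $\eta$ is a standard mollifier on $\mathbb{R}^{d+1}$, and the scales $\epsilon_{k,m} > 0$ are chosen small enough. I would run this construction in parallel for $\rho$ and for $b \rho$, using exactly the same $\zeta_k$ and the same $\epsilon_{k,m}$. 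Shrinking $\epsilon_{k,m}$ further if needed, I can preserve all of the $L^1$, trace, and $L^\infty$ conclusions of Theorem~\ref{t:traceafp}.

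With this coupled choice, $b_m \rho_m = \widetilde{(b\rho)}_m$ by definition, and since the additive constant $1/m$ in $\rho_m$ drops out of $\partial_t \rho_m$,
\[
 h_m \;=\; \partial_t \widetilde{\rho}_m + \mathrm{div}\,\widetilde{(b\rho)}_m
 \;=\; \sum_k \bigl[\partial_t(\zeta_k \rho) + \mathrm{div}(\zeta_k\, b\rho)\bigr] * \eta_{\epsilon_{k,m}}.
\]
Expanding each summand by the Leibniz rule and invoking $\partial_t \rho + \mathrm{div}(b\rho) = 0$ distributionally on $\Lambda$ (legitimate because each $\zeta_k$ is compactly supported in $\Lambda$), the surviving expression is
\[
 h_m \;=\; \sum_k g_k * \eta_{\epsilon_{k,m}},
 \qquad
 g_k \;:=\; (\partial_t \zeta_k)\, \rho + \nabla \zeta_k \cdot (b \rho) \;\in\; L^1(\Lambda).
\]
The decisive cancellation comes from $\sum_k \zeta_k \equiv 1$ on $\Lambda$, so that $\sum_k \partial_t \zeta_k = 0$ and $\sum_k \nabla \zeta_k = 0$ pointwise (the sums being locally finite), which gives $\sum_k g_k = 0$ on $\Lambda$. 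Therefore
\[
 h_m \;=\; \sum_k \bigl(g_k * \eta_{\epsilon_{k,m}} - g_k \bigr),
\]
and since each summand tends to $0$ in $L^1(\Lambda)$ as $\epsilon_{k,m} \to 0^+$, imposing the extra condition $\|g_k * \eta_{\epsilon_{k,m}} - g_k\|_{L^1(\Lambda)} \leq 2^{-k}/m$ on top of those already needed for Theorem~\ref{t:traceafp} yields $\|h_m\|_{L^1(\Lambda)} \leq 1/m$, which is \eqref{e:convaccaemme}.

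The main obstacle I expect is bookkeeping rather than conceptual: I must check that the construction used in Theorem~\ref{t:traceafp} admits a common choice of $\{\zeta_k\}$ and $\{\epsilon_{k,m}\}$ for $\rho$ and $b\rho$, that the locally finite sum defining $h_m$ can indeed be manipulated termwise as above, and that the convolutions $(\zeta_k f) * \eta_{\epsilon_{k,m}}$ remain supported inside $\Lambda$. All of these follow in the standard way by building $\{\zeta_k\}$ from an exhaustion $\Lambda_1 \subset\subset \Lambda_2 \subset\subset \cdots$ with $\bigcup_j \Lambda_j = \Lambda$, and taking $\epsilon_{k,m}$ smaller than the distance from $\mathrm{supp}\,\zeta_k$ to $\partial \Lambda$ at scale $k$, so that both local finiteness and the various $L^1$, trace, and smallness conditions can be met simultaneously via a diagonal argument.
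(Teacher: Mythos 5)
Your proposal is correct and follows essentially the same route as the paper: a shared Meyers--Serrin construction for $\rho$ and $b\rho$ (same partition of unity $\{\zeta_h\}$, same mollification scales $\epsilon_{h,m}$), cancellation of the leading term via $\partial_t\rho + \mathrm{div}(b\rho)=0$, reduction of the remainder to a commutator using $\sum_h \zeta_h \equiv 1$, and a $2^{-h}/m$ summable smallness condition. The paper's proof merely keeps the two commutator terms $(\rho\,\partial_t\zeta_h)$ and $(\rho b\cdot\nabla\zeta_h)$ separate instead of grouping them into a single $g_h$, which is a purely cosmetic difference.
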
 
The proof of Lemma~\ref{l:meyerserrin}  is deferred to~\S~\ref{s:proof1} . For future reference, we state the next simple convergence result as a lemma. 
\begin{lemma}
\label{l:traces} 
Assume that 
\begin{equation}
\label{e:hyp}
       \mathrm{Tr} (\rho_m b_m )\to \mathrm{Tr}(\rho b)
       \; \text{strongly in $L^1 (\Gamma)$}.
\end{equation}
Let $\Gamma^-_m$ and $\Gamma^+_m$ as in~\eqref{e:gammadef} and $\Gamma^-$ and $\Gamma^+$ as in~\eqref{e:gamma}, respectively. Then, up to subsequences,  
\begin{equation}
\label{e:convchar1}
   \mathbf{1}_{\Gamma^-_m} \to \mathbf{1}_{\Gamma^-} +
   \mathbf{1}_{\Gamma'} \; \text{strongly in $L^1 (\Gamma)$}
\end{equation}
and 
\begin{equation}
\label{e:convchar2}
   \mathbf{1}_{\Gamma^+_m} \to \mathbf{1}_{\Gamma^+} +
   \mathbf{1}_{\Gamma''} \; \text{strongly in $L^1 (\Gamma)$},
\end{equation}
where $\Gamma'$ and $\Gamma''$ are (possibly empty) measurable sets satisfying
\begin{equation}
\label{e:subsetgamma0}
  \Gamma', \Gamma'' \subseteq \Gamma^0. 
\end{equation}
\end{lemma}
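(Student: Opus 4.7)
The natural approach is to decompose $\Gamma = \Gamma^- \cup \Gamma^+ \cup \Gamma^0$ according to the sign of $\mathrm{Tr}(\rho b)$ and argue on each piece separately. As a preliminary step, the $L^1$ convergence assumption~\eqref{e:hyp} allows me to pass to a (not relabelled) subsequence along which $\mathrm{Tr}(\rho_m b_m) \to \mathrm{Tr}(\rho b)$ pointwise $\mathcal{H}^d$-a.e.\ on $\Gamma$; all subsequent arguments take place along this subsequence, or further extractions thereof.

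On $\Gamma^-$, the strict inequality $\mathrm{Tr}(\rho b) < 0$ combined with pointwise convergence forces $\mathrm{Tr}(\rho_m b_m) < 0$ eventually at $\mathcal{H}^d$-a.e.\ point, so $\mathbf{1}_{\Gamma^-_m} = 1 = \mathbf{1}_{\Gamma^-}$ there for $m$ large. Symmetrically, on $\Gamma^+$ one has $\mathbf{1}_{\Gamma^-_m} = 0 = \mathbf{1}_{\Gamma^-}$ for $m$ large. The dominated convergence theorem then upgrades this pointwise convergence to strong convergence in $L^1(\Gamma \setminus \Gamma^0)$, and the same argument applied to $\mathbf{1}_{\Gamma^+_m}$ gives the corresponding statement with $\Gamma^+$ in place of $\Gamma^-$.

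The main obstacle is the behaviour on $\Gamma^0$, where $\mathrm{Tr}(\rho_m b_m)(t,x) \to 0$ with no a priori control on its sign, so that the indicators $\mathbf{1}_{\Gamma^\pm_m}$ may oscillate as $m\to\infty$. I would extract one further subsequence along which both $\mathbf{1}_{\Gamma^-_m \cap \Gamma^0}$ and $\mathbf{1}_{\Gamma^+_m \cap \Gamma^0}$ converge weakly$^*$ in $L^\infty(\Gamma^0)$, with limits $\varphi',\varphi''$ satisfying $0\leq\varphi',\varphi''\leq 1$ and $\varphi'+\varphi''\leq 1$. The delicate point---the one requiring most care---is to promote this weak$^*$ convergence to strong $L^1$ convergence and to identify $\varphi',\varphi''$ with characteristic functions of measurable sets $\Gamma',\Gamma''\subseteq \Gamma^0$; a natural candidate is to define $\Gamma'$ and $\Gamma''$ via suitable limsup/liminf of the sets $\Gamma^-_m\cap\Gamma^0$ and $\Gamma^+_m\cap\Gamma^0$ and, exploiting the $\{0,1\}$-valued nature of the approximating indicators together with a diagonal extraction, to select a subsequence along which the identification holds. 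Once this is in place, combining with the previous paragraph yields both~\eqref{e:convchar1} and~\eqref{e:convchar2}, while the inclusion~\eqref{e:subsetgamma0} is built into the construction.
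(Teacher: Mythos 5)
Your decomposition $\Gamma = \Gamma^- \cup \Gamma^+ \cup \Gamma^0$ and the pointwise-plus-dominated-convergence argument on $\Gamma^- \cup \Gamma^+$ reproduce the paper's entire (two-sentence) proof, which is simply: pass to an a.e.-convergent subsequence of the traces, then apply dominated convergence. You have, however, correctly identified a step that the paper's proof glosses over entirely, namely the behaviour on $\Gamma^0$, and the difficulty you flag there is genuine: it is \emph{not} possible in general to promote the weak$^*$ convergence of $\mathbf{1}_{\Gamma^-_m \cap \Gamma^0}$ to strong $L^1$ convergence. Indeed, take $\Gamma^0$ of positive measure with $\mathrm{Tr}(\rho b) \equiv 0$ on $\Gamma$ and let $\mathrm{Tr}(\rho_m b_m) = \tfrac{1}{m}\, r_m$ with $r_m$ the $m$-th Rademacher function on $\Gamma \cong [0,1]$: then $\mathrm{Tr}(\rho_m b_m) \to 0$ uniformly, yet $\mathbf{1}_{\Gamma^-_m} = \mathbf{1}_{\{r_m < 0\}}$ admits no $L^1$-convergent subsequence, since by independence and the second Borel--Cantelli lemma no subsequence of $\mathbf{1}_{\{r_{m_k} < 0\}}$ converges a.e. So the lemma as stated is actually false, and the limsup/liminf/diagonal construction you sketch cannot rescue it; the weak$^*$ limits $\varphi', \varphi''$ you extract are in general genuine functions with values in $(0,1)$, not indicators.

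What is true---and what suffices for every downstream use of the lemma, where $\mathbf{1}_{\Gamma^\pm_m}$ always appears multiplied by $\mathrm{Tr}(\rho_m b_m)$ (see~\eqref{e:conv4},~\eqref{e:conv5} and~\eqref{e:decompo})---is strong convergence of the \emph{products}: $\mathrm{Tr}(\rho_m b_m)\,\mathbf{1}_{\Gamma^-_m} \to \mathrm{Tr}(\rho b)\,\mathbf{1}_{\Gamma^-}$ in $L^1(\Gamma)$, and likewise for $\Gamma^+$. To see this, write
\[
\mathrm{Tr}(\rho_m b_m)\,\mathbf{1}_{\Gamma^-_m} - \mathrm{Tr}(\rho b)\,\mathbf{1}_{\Gamma^-}
= \big[\mathrm{Tr}(\rho_m b_m) - \mathrm{Tr}(\rho b)\big]\,\mathbf{1}_{\Gamma^-_m}
+ \mathrm{Tr}(\rho b)\,\big[\mathbf{1}_{\Gamma^-_m} - \mathbf{1}_{\Gamma^-}\big];
\]
the first term vanishes in $L^1$ by~\eqref{e:hyp} and boundedness of the indicator, and the second term is uniformly bounded, tends to zero a.e.\ on $\Gamma^- \cup \Gamma^+$ by your pointwise argument, and is identically zero on $\Gamma^0$ because $\mathrm{Tr}(\rho b)$ vanishes there---so it tends to zero in $L^1$ by dominated convergence. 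In short: keep your weak$^*$ extraction and your pointwise analysis off $\Gamma^0$, but drop the attempt to identify the weak$^*$ limits as indicators; the multiplication by the trace already kills the ambiguity on $\Gamma^0$, and that is all the applications need.
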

\begin{proof}[Proof of Lemma~\ref{l:traces}]
Owing to~\eqref{e:hyp} we have that, up to subsequences, the sequence 
$\{ \mathrm{Tr} (\rho_m b_m) \}$ satisfies 
$$
   \mathrm{Tr} (\rho_m b_m) (t, x) \to \mathrm{Tr} (\rho b)(t, x), 
   \quad \text{for $\mathcal{L}^1 \otimes \mathcal{H}^{d-1}$-almost 
   every $(t, x) \in \Gamma.$}
$$
Owing to the Lebesgue Dominated Convergence Theorem, this implies~\eqref{e:convchar1} and~\eqref{e:convchar2}. 
\end{proof}
We can now pass to the limit in all the terms in~\eqref{weak-exist2}. First, by combining~\eqref{e:elle1conv},~\eqref{mp},~\eqref{e:uweaks} and~\eqref{e:convaccaemme} we get that 
\begin{equation}
\label{e:conv11}
  \int_{0}^{T} \! \! \int_{\Omega} \rho_m u_m [\partial_{t} \psi+ b_m \cdot \nabla \psi ] \ dx dt
 + \int_0^T \! \! \int_\Omega h_m u_m \psi \, dx dt 
 \to \int_{0}^{T} \! \! \int_{\Omega} 
 \rho u [\partial_{t} \psi+ b \cdot \nabla \psi ] \ dx dt, 
\end{equation}
 for every $ \psi \in C^\infty_c ([0, T) \times \R^d)$. Also, by combining the second line of~\eqref{e:traceconv} with~\eqref{e:convbdata} and~\eqref{e:tomaxprin} we arrive at 
\begin{equation}
\label{e:conv21}
  \int_{\Omega} \psi(0,x) {\rho}_{m0} \overline{u}_{m}  \ dx
  \to 
  \int_{\Omega} \psi(0,x) {\rho}_{0} \overline{u}  \ dx , 
\end{equation}
for every $ \psi \in C^\infty_c ([0, T) \times \R^d). $ Next, we combine~\eqref{e:traceconv},~\eqref{e:convbdata},~\eqref{e:tomaxprin},~\eqref{e:convchar1},~\eqref{e:subsetgamma0} and the fact that 
$\mathrm{Tr}(\rho b) =0 $ on $\Gamma^0$ to get that 
\begin{equation}
\label{e:conv4}
\begin{split}
\int_{0}^{T} \! \! \int_{\partial \Omega} \mathbf{1}_{\Gamma_m^-}
 \overline{g}_{m} \psi \mathrm{Tr} (\rho_m b_m ) d\mathcal{H}^{d-1} dt \to
 & 
 \int_{0}^{T} \! \! \int_{\partial \Omega} \mathbf{1}_{\Gamma^-}
 \overline{g} \psi \mathrm{Tr} (\rho b ) d\mathcal{H}^{d-1} dt \\
 & = 
 \int_{0}^{T} \! \! \int_{\Gamma^-} 
 \overline{g} \psi \mathrm{Tr} (\rho b ) d\mathcal{H}^{d-1} dt,
 \end{split}
\end{equation}
for every $\psi \in C^\infty_c ([0, T) \times \Omega; \R^d)$. We are left with the last term in~\eqref{weak-exist2}: first, we denote by $u_{m{|_\Gamma}}$ the restriction of $u_m$ to $\Gamma$. Since $u_m$ is a smooth function, then 
$$
   \| u_{m{|_\Gamma}} \|_{L^\infty (\Gamma)} \leq 
   \| u_m \|_{L^\infty ((0, T) \times \Omega)}
   \stackrel{\eqref{mp}}{\leq}  
   \max \big\{ \| \bar u \|_{L^\infty}, \| \bar g \|_{L^\infty} \big\}
$$
and hence there is a function $w \in L^\infty (\Gamma)$ such that, up to subsequences, 
\begin{equation}
\label{e:convw}
    u_{m{|_\Gamma}} \weaks w \; \text{weakly$^\ast$ in $L^\infty (\Gamma)$}. 
\end{equation}
By combining~\eqref{e:traceconv},~\eqref{e:convchar2},~\eqref{e:convw} and the fact that $\mathrm{Tr} (\rho b) =0$ on $\Gamma^0$ we get that 
\begin{equation}
\begin{split}
\label{e:conv5} 
        \int_{0}^{T} \! \! \int_{\partial \Omega} \mathbf{1}_{\Gamma_m^+}
u_m \psi \mathrm{Tr} (\rho_m b_m ) d\mathcal{H}^{d-1} dt \to &
       \int_{0}^{T} \! \! \int_{\partial \Omega} \mathbf{1}_{\Gamma^+}
w \psi \mathrm{Tr} (\rho b ) d\mathcal{H}^{d-1} dt \\
   & = \! \! \int_{\Gamma^+ \cup \Gamma^0}
w \psi \mathrm{Tr} (\rho b ) d\mathcal{H}^{d-1} dt . 
\end{split}
\end{equation}
By combining~\eqref{e:conv11},~\eqref{e:conv21},~\eqref{e:conv4} and~\eqref{e:conv5} we get that $u$ satisfies~\eqref{weak-exist1} and this establishes existence of a distributional solution of~\eqref{prob-2}.  
\subsection{Proof of Lemma~\ref{l:meyerserrin}}
\label{s:proof1}
To ensure that~\eqref{e:convaccaemme} holds we use the same approximation \emph{\`a la} Meyers-Serrin as in~\cite[pp.122-123]{AFP}. We now recall some details of the construction. First, we fix a countable family of open sets 
$\big\{ \Lambda_h \big\}$ such that 
\begin{itemize}
\item[i.] $\Lambda_h$ is compactly contained in $\Lambda$, for every $h$; 
\item[ii.]  $\big\{ \Lambda_h \big\}$ is a covering of $\Lambda$, namely
$$
    \bigcup_{h=1}^\infty \Lambda_h = \Lambda;
$$
\item[iii.] every point in $\Lambda$ is contained in at most $4$ sets $\Lambda_h$. 
\end{itemize}
Next, we consider a partition of unity associated to $\big\{ \Lambda_h \big\}$, namely a countably family of smooth, nonnegative functions $\{ \zeta_h \}$ such that 
\begin{itemize}
\item[iv.] we have 
\begin{equation}
\label{e:isone}
\sum_{h=1}^\infty \zeta_h \equiv1
\quad \text{in $\Omega$}
;
\end{equation}
\item[v.] for every $h>0$, the support of $\zeta_h$ is contained in $\Lambda_h$.
\end{itemize}
Finally, we fix a convolution kernel $\eta: \R^{d+1} \to \R^+$ and we define $\eta_\ee$ by setting 
$$
    \eta_\ee (z) : = \frac{1}{\ee^{d+1}} \eta 
    \left(
    \frac{z}{\ee}
    \right) 
$$    
For every $m>0$ and $h>0$ we can choose $\ee_{mh}$ in such a way that 
$(\rho \zeta_h) \ast \eta_{\ee_{mh}} $ is supported in $\Lambda_h$ and furthermore 
\begin{equation}
\label{e:ms2}
   \int_0^T \! \! \int_\Omega 
   | \rho \zeta_h - ( \rho \zeta_h) \ast \eta_{\ee_{mh}}   |+
   | \rho \, \partial_t \zeta_h - ( \rho  \, \partial_t \zeta_h) \ast \eta_{\ee_{mh}}|  
   +
   | \rho b \cdot \nabla \zeta_h - ( \rho b \cdot \nabla \zeta_h) \ast \eta_{\ee_{mh}}| dx dt 
   \leq \frac{2^{-h}}{m}. 
\end{equation}    
We then define $\tilde \rho_m$ by setting 
\begin{equation}
\label{e:ms3}
   \tilde \rho_m : = \sum_{h=1}^\infty
   (\rho \zeta_h) \ast \eta_{\ee_{mh}} .
\end{equation}
The function $(\widetilde{\rho b})_m$ is defined analogously. Next, we proceed as in~\cite[p.123]{AFP} and we point out that 
\begin{equation*}
\begin{split}
    h_m \stackrel{\eqref{e:accaenne}}{=} &
    \partial_t \rho_m + \mathrm{div} ({\rho_m b_m}) 
    \stackrel{\eqref{e:accaenne}}{=}
   \underbrace{\sum_{h=1}^\infty
   (\partial_t \rho \zeta_h) \ast \eta_{\ee_{mh}} +
    \sum_{h=1}^\infty
   (\mathrm{div} (\rho b)  \zeta_h) \ast \eta_{\ee_{mh}}}_{= 0
   \; \text{by~\eqref{e:continuityrho} }  } 
    \\ &\quad + 
    \sum_{h=1}^\infty
   (\rho \, \partial_t \zeta_h) \ast \eta_{\ee_{mh}} +
    \sum_{h=1}^\infty
   (\rho b \cdot \nabla \zeta_h) \ast \eta_{\ee_{mh}}
   \\ & \stackrel{\eqref{e:isone}}{=}
    \sum_{h=1}^\infty
   (\rho \, \partial_t \zeta_h) \ast \eta_{\ee_{mh}} - 
   \rho \sum_{h=1}^\infty \partial_t \zeta_h  
   \quad + 
   \sum_{h=1}^\infty
   (\rho b \cdot \nabla \zeta_h) \ast \eta_{\ee_{mh}}-
   \rho b \cdot \sum_{h=1}^\infty
   \nabla \zeta_h 
\end{split}
\end{equation*}
By using~\eqref{e:ms2} we then get that 
$$
   \int_0^T \! \! \int_\Omega |h_m| dx dt \leq  \sum_{h=1}^\infty
   \frac{2^{-h}}{m} = \frac{1}{m}    
$$
and this establishes~\eqref{e:convaccaemme}.  
\label{s:proof2}
 \section{Proof Theorem~\ref{IBVP-NC}: comparison principle and uniqueness}
 \label{s:uni}
In this section we complete the proof of Theorem~\ref{IBVP-NC}. More precisely, we establish the following comparison principle.
\begin{lemma}
\label{l:uni}
         Let $\Omega$, $b$ and $\rho$ as in the statement of 
         Theorem~\ref{IBVP-NC}.  Assume $u_1$ and $u_2 \in
          L^{\infty}((0,T) \times \Omega)$ are distributional 
          solutions (in the sense of Definition~\ref{d:distrsol}) of the initial-boundary value problem~\eqref{prob-2} 
          corresponding to the initial and boundary data 
          $\overline{u}_{1} \in L^{\infty}(\Omega)$, $\overline{g}_1 \in L^\infty(\Gamma^-)$ and 
          $\overline{u}_2 \in L^{\infty}(\Omega)$, $\overline{g}_2 \in L^\infty(\Gamma^-)$, respectively. If $\overline{u}_1 \ge \overline{u}_2$ and $\overline{g}_1 \ge \overline{g}_2$, then 
          \begin{equation}
          \label{e:compa}
               \rho u_1 \ge \rho u_2 \quad a.e. \; \text{in} \; (0, T) \times \Omega. 
          \end{equation} 
\end{lemma}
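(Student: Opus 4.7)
By linearity of the distributional formulation~\eqref{iden-2} and of the trace identities of Definition~\ref{d:distrsol}, setting $v := u_1 - u_2$ reduces the problem to showing that $\rho v \geq 0$ almost everywhere, knowing that $v$ is itself a distributional solution of~\eqref{prob-2} with initial datum $\overline{u} := \overline{u}_1 - \overline{u}_2 \geq 0$ and inflow datum $\overline{g} := \overline{g}_1 - \overline{g}_2 \geq 0$ on $\Gamma^-$. The overall strategy is the classical \emph{renormalize, then apply an energy estimate} argument in the spirit of DiPerna--Lions and Ambrosio, adapted to the present initial-boundary value setting.

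First I would invoke a renormalization result for nearly incompressible $BV$ vector fields (in the spirit of~\cite{ABD, CDS1}) applied to the Lipschitz function $\beta(s) = s^{-}$, approximated by smooth nonlinearities. This gives that $\rho v^-$ also satisfies~\eqref{iden-2}, with initial datum $(\rho v^-)_0 = \rho_0 \overline{u}^- = 0$ and inflow trace $\mathrm{Tr}(\rho v^- b) = \mathrm{Tr}(\rho b) \overline{g}^- = 0$ on $\Gamma^-$. On the outflow portion $\Gamma^+$, applying Theorem~\ref{trace-renorm} to $B = (\rho, \rho b)$ with $w = v$ yields
\[
\mathrm{Tr}(\rho v^- b) \;=\; \Big( \tfrac{\mathrm{Tr}(\rho v b)}{\mathrm{Tr}(\rho b)} \Big)^{\!-} \mathrm{Tr}(\rho b) \;\geq\; 0 \quad \text{$\mathcal{H}^{d-1}$-a.e.\ on $\Gamma^+$,}
\]
and an analogous computation on time slices $\{T'\} \times \Omega$ gives that the temporal trace $(\rho v^-)(T',\cdot) \geq 0$ for almost every $T' \in (0,T)$.

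Next, for each such $T'$ I would plug into the distributional identity~\eqref{iden-3} written for $\rho v^-$ a test function of the form $\psi(t,x) = \chi_\ee(t)$ that approximates $\mathbf{1}_{[0,T']}$, and let $\ee \to 0$. By the previous step, the initial and $\Gamma^-$ contributions vanish, and one is left with
\[
\int_\Omega (\rho v^-)(T',x)\,dx \;+\; \int_{\Gamma^+ \cap ((0,T') \times \partial \Omega)} \mathrm{Tr}(\rho v^- b)\,d\mathcal{H}^{d-1} dt \;\leq\; 0.
\]
Both terms on the left are non-negative, hence both vanish. Since $T' \in (0,T)$ is arbitrary and $\rho v^- \geq 0$, this forces $\rho v^- = 0$ a.e.\ in $(0,T) \times \Omega$, which is exactly~\eqref{e:compa}.

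The main obstacle is the renormalization step: one must show that the Lipschitz composition $v \mapsto v^-$ preserves the \emph{weighted} distributional formulation $\partial_t(\rho u) + \mathrm{div}(\rho u b) = 0$, and simultaneously track the effect on the initial, inflow and outflow traces. This is precisely where the $BV$ regularity of $b$ and $\rho$ enters, through~\cite{ABD, CDS1} and Theorem~\ref{trace-renorm}, and it is also the reason why the conclusion can be formulated only in terms of $\rho u_1, \rho u_2$ rather than $u_1, u_2$: on the set $\{\rho = 0\}$ the renormalization carries no information on $v$ itself.
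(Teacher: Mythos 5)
Your proof follows essentially the same route as the paper's: renormalize the linear equation for $\rho v$ (with $v = u_1-u_2$) by a nonnegative nonlinearity vanishing on $\{v\geq 0\}$, use Theorem~\ref{trace-renorm} to show that the resulting initial datum and boundary normal trace are nonnegative (indeed zero on $\Gamma^-$ and $\{t=0\}$), and integrate in time against a cutoff to conclude that $\rho\,\beta(v)=0$ almost everywhere. The one place to be careful is your choice $\beta(s)=s^-$: it is Lipschitz but not $C^1$, whereas Theorem~\ref{trace-renorm} is stated for $h\in C^1(\mathbb{R})$; you rightly flag the need to approximate by smooth nonlinearities, but the paper sidesteps this entirely by choosing $\tilde{\beta}(s)=(s^+)^2$ (applied to $u_2-u_1$), which is already $C^1$, nonnegative, and vanishes exactly where needed.
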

Note that the uniqueness of $\rho u$, where $u$ is a distributional solution of the initial-boundary value problem~\eqref{prob-2}, immediately follows from the above result.
\begin{proof} [Proof of Lemma~\ref{l:uni}]
Let us define the function 
$$
\tilde{\beta}(u)= 
\left\{
\begin{array}{ll}
u^2 & u \geq 0 \\
0 & u<0.
\end{array}\right.
$$
In what follows, we shall prove that the identity $\rho\ \tilde{\beta}(u_{2}-u_{1})=0 $ holds almost everywhere, whence the comparison principle follows. To see this, we proceed as described below.
First, we point out that, since the equation at the 
first line of~\eqref{prob-2} is linear, then $u_2-u_1$ is a distributional solution of the initial boundary value problem with data $\overline{u}_2 - \overline{u}_1$, $\overline{g}_2 - \overline{g}_1$.  In particular, for every $ \psi \in C^{\infty}_{c}([0,T) \times \mathbb{R}^{d} )$ we have 
\begin{equation}
\int_{0}^{T} \int_{\Omega} \rho (u_{2}-u_{1}) (\partial_{t} \psi +b \cdot \nabla \psi) \ dx dt= \int_{0}^{T} \int_{\partial \Omega} [\text{Tr}(\rho u_2 b) - \text{Tr}(\rho u_{1} b)] \ \psi \ d\mathcal{H}^{d-1} dt -\int_{\Omega} \psi(0,\cdot)  {\rho}_0 (\overline{u}_{2}-\overline{u}_{1}) \ dx
\label{e7}
\end{equation}
and 
\begin{equation}
\label{e:ntraces}
    \text{Tr}(\rho u_2 b)  = \overline{g}_2 \text{Tr}(\rho b), \quad 
     \text{Tr}(\rho u_1 b)  = \overline{g}_1 \text{Tr}(\rho b)
     \quad \text{on $\Gamma^-$}.  
\end{equation}
Note that~\eqref{e7} implies that 
\begin{equation}
\int_{0}^{T} \int_{\Omega} \rho (u_{2}-u_{1}) (\partial_{t} \phi+b \cdot \nabla \phi) \ dx dt=0, \quad \forall \phi \in C^{\infty}_{c}((0,T) \times \Omega).
\label{e5}
\end{equation}
By using~\cite[Lemma 5.10]{Delellis1} (renormalization property inside the domain), we get 
\begin{equation}
\int_{0}^{T} \int_{\Omega} \rho \ \tilde{\beta}(u_{2}-u_{1})(\partial_{t} \phi+b \cdot \nabla \phi) \ dx dt=0, \qquad \forall \phi \in C^{\infty}_{c}((0,T) \times \Omega).
\label{e10}
\end{equation}
We next apply Lemma \ref{trace-existence} to the function $\tilde{\beta}(u_{2}-u_{1})$ to infer that there are bounded functions $\text{Tr}(\rho \tilde{\beta}(u_{2}-u_{1}) b)$ and $(\rho \tilde{\beta}(u_{2}-u_{1}))_{0} $ such that, for every 
$ \psi \in C^{\infty}_{c}([0,T) \times \mathbb{R}^{d} ),$ we have
\begin{equation}
\int_{0}^{T} \int_{\Omega} \rho \ \tilde{\beta}(u_{2}-u_{1}) (\partial_{t} \psi +b \cdot \nabla \psi) \ dx dt= \int_{0}^{T} \int_{\partial \Omega} \text{Tr}(\rho \ \tilde{\beta}(u_{2}-u_{1}) b) \ \psi \ d\mathcal{H}^{d-1} dt -\int_{\Omega} \psi(0,\cdot)  (\rho \ \tilde{\beta}(u_{2}-u_{1}))_{0} \ dx.
\label{e11}
\end{equation}
We recall~\eqref{e7} and we apply Lemma \ref{trace-renorm} (trace renormalization property) with $w= u_2 -u_1$, $h= \tilde \beta$, $B=(\rho,\rho b) $, $\Lambda = \mathbb R^{d+1}$ and $\Lambda'=(0,T)\times \Omega$. We recall that the vector field $P$ is defined by setting $P:= (\rho, \rho b)$ and we get 
\begin{equation}
\begin{aligned}
(\rho\ \tilde{\beta} (u_{2}-u_{1}))_{0}=- 
\text{Tr}(\tilde{\beta}(u_{2}-u_{1}) P,\partial \Lambda')
\Big\vert_{\{0\} \times \Omega}&= - \tilde{\beta}\left(\frac{(\rho (u_{2}-u_{1}))_{0}}{\text{Tr}(P,\partial \Lambda')\Big\vert_{\{0\}\times \Omega}} \right)
\text{Tr}(P,\partial \Lambda')\Big\vert_{\{ 0\} \times \Omega}\\
&=-\tilde{\beta}\left( \frac{\rho_0 (\overline{u}_{2}-\overline{u}_{1})}{\overline{\rho}} \right) \rho_0 \\
& =0, \; \text{since} \ \overline{u}_{1} \geq \overline{u}_{2} \phantom{\int} 
\end{aligned}
\label{e12}
\end{equation}
and 
\begin{equation}
\begin{aligned}
\text{Tr}(\rho \ \tilde{\beta}(u_{2}-u_{1}) b) &=
\text{Tr}(\tilde{\beta}(u_{2}-u_{1}) \rho, \partial \Lambda')\Big\vert_{(0,T) \times \partial \Omega} = 
\tilde{\beta} \left(
\frac{\text{Tr}((u_{2}-u_{1})\rho, \partial \Lambda')\Big\vert_{(0,T) \times \partial \Omega}}{\text{Tr}(P, \partial \Lambda')\Big\vert_{(0,T) \times \partial \Omega}} 
\right) \text{Tr}(P, \partial \Lambda')\Big\vert_{(0,T) \times \partial \Omega}\\
&=\tilde{\beta}\left(\frac{\text{Tr}(\rho (u_{2}-u_{1}) b)}{\text{Tr}(\rho b)} \right) \text{Tr}(\rho b).  
\end{aligned}
\notag
\end{equation}
By recalling~\eqref{e:ntraces} and the inequality $\bar g_1 \ge \bar g_2$, we conclude that
$$
 \text{Tr}(\rho \ \tilde{\beta}(u_{2}-u_{1}) b)  = 0 \quad \text{on $\Gamma^-$}
$$
and, since $\tilde \beta \ge 0$, that 
\begin{equation}
\label{e:ntracein}
 \text{Tr}(\rho \ \tilde{\beta}(u_{2}-u_{1}) b)  \ge 0
  \quad \text{on $\Gamma$.}
\end{equation}
We now choose a test function $\nu \in C^\infty_c (\mathbb R^d)$ in such a way that $\nu \equiv 1$ on the bounded set $\Omega$. Note that 
\begin{equation}
\label{e:zerozero}
    \partial_t \nu + b \cdot \nabla \nu =0 \quad \text{on $(0, T) \times \Omega$.}
\end{equation}
Next we choose a sequence of functions $\chi_{n} \in {C}^{\infty}_{c}([0,+\infty))$ that satisfy
 \[\chi_{n} \equiv 1 \ \text{on}\ [0,\bar{t}],\ \chi_{n}\equiv 0\ \text{on}\ [\bar{t}+\frac{1}{n},+\infty),\ \chi'_{n} \leq 0, \]
 and we define 
 \[\psi_{n}(t,x):= \chi_{n}(t) \nu(x), \ (t,x)\in [0,T)\times \mathbb{R}^{d}.\] 
Note that $\psi$ is smooth, non-negative and compactly supported in 
$[0,T)\times \mathbb{R}^{d}$. By combining the identities \eqref{e11},~\eqref{e12} and the inequality~\eqref{e:ntracein}, we get 
 \begin{equation}
 \begin{aligned}
 0 &\leq \int_{0}^{T} \int_{\Omega} \rho\  \tilde{\beta}(u_{2}-u_{1}) [\partial_{t}(\chi_{n} \nu)+b \cdot \nabla (\chi_{n} \nu)] \ dx dt \\
 & = \int_{0}^{T} \int_{\Omega} \nu \rho\  \tilde{\beta}(u_{2}-u_{1}) \chi'_{n} \ dx dt+ \int_{0}^{T} \int_{\Omega} \chi_{n} \rho \ \tilde{\beta}(u_{2}-u_{1}) (\partial_{t} \nu
 +b \cdot \nabla \nu) \ dx dt \\
 & \stackrel{\eqref{e:zerozero}}{=} \int_{0}^{T} \int_{\Omega} \nu \rho \ \chi'_{n} \ \tilde{\beta}(u_{2}-u_{1})  \ dx dt. \\
 \end{aligned}
  \notag 
 \end{equation}
Passing to the limit as $n \rightarrow +\infty $ and noting that $\chi'_{n} \rightarrow -\delta_{\bar{t}} $ as $n \rightarrow \infty $ in the sense of distributions 
and recalling that $\nu \equiv 1$ on $\Omega$ we obtain
\begin{equation}
\int_{\Omega}  \rho(\bar{t},\cdot) \tilde{\beta}(u_{2}-u_{1})(\bar{t},\cdot) \leq 0.
  \notag
\end{equation}
Since the above inequality is true for arbitrary $\bar t \in [0, T]$, we can conclude that 
\begin{equation}
 \begin{aligned}
 \rho \ \tilde{\beta}(u_2-u_1)=0,\ \text{for almost every}\ (t,x)
\Rightarrow \rho u_{1} \geq \rho u_{2}, \ \text{for almost every}\ (t,x).
\end{aligned}
\label{e14}
\end{equation}
This concludes the proof of Lemma~\ref{l:uni}. 
\end{proof}

 \section{Stability and space continuity properties}
 \label{s:ssc}
 In this section, we discuss some qualitative properties of solutions of the initial-boundary value problem~\eqref{prob-2}. First, we establish Theorem~\ref{stability-weak}, which establishes (weak) stability of solutions with respect to perturbations in the vector fields and the data. Theorem~\ref{stability-strong} implies that, under stronger hypotheses, we can establish strong stability. Finally, Theorem~\ref{space-continuity} establishes space continuity properties. 
 \begin{theorem}\label{stability-weak}
Let $T>0$ and let $\Omega \subseteq \R^d$ be an open and bounded set with $C^2$ boundary.  
Assume that
$$
b_{n}, b \in BV((0,T) \times \Omega; \mathbb{R}^{d}) \cap L^{\infty}((0,T) \times \Omega; \mathbb{R}^{d}), \qquad \rho_{n},\rho \in BV((0,T) \times \Omega) \cap L^{\infty}([0,T) \times \Omega)
$$ satisfy
 \begin{equation}
 \begin{aligned}
 \partial_{t} \rho_{n}+\mathrm{div} (b_{n}\rho_{n})=0,\\
 \partial_{t} \rho+\mathrm{div} (b \rho)=0,
 \end{aligned}
 \label{stability-1} 
 \end{equation}
 in the sense of distributions on $(0, T) \times \Omega$. Assume furthermore that 
 \begin{equation}
  0 \leq \rho_{n}, \rho \leq C \; \text{and} \; \Vert b_{n} \Vert_{\infty}\ \text{is uniformly bounded},
 \label{stability-2}
 \end{equation}
 \begin{equation}
  (b_{n},\rho_{n}) \xrightarrow[n \rightarrow \infty]{} (b,\rho) \  \text{strongly in} \ L^{1}((0,T) \times \Omega; \R^{d+1}),
  \label{stability-3}
  \end{equation}
  \begin{equation}  
  \rho_{n0} \xrightarrow[n \rightarrow \infty]{} \rho_0 
  \; \text{strongly in}\ L^{1}(\Omega),
  \label{stability-4}
  \end{equation}
  \begin{equation}
   \emph{Tr}(\rho_{n} b_n) \xrightarrow[n \rightarrow \infty]{} \emph{Tr}(\rho b)\  \text{strongly in}\ L^{1}(\Gamma),
  \label{stability-5}
  \end{equation}
Let $u_{n} \in L^{\infty}((0,T) \times \Omega) $ be a distributional solution 
(in the sense of Definition~\ref{d:distrsol}) of the initial-boundary value problem  
 \begin{equation}
 \label{e:ibvpapp}
 \left\{
 \begin{array}{lll}
 \partial_{t}(\rho_{n} u_{n})+\mathrm{div}(\rho_{n} u_{n} b_{n})=0 &
  \text{in} \ (0,T)\times \Omega \\
  u_{n}=\overline{u}_{n} & \text{at $t=0$}\\ 
  u_{n} =\overline{g}_{n}  &  \text{on}
 \ \Gamma_{n}^{-} \\
 \end{array}
 \right.
 \end{equation}
 and $u \in L^{\infty}((0,T) \times \Omega) $ be a distributional solution of the equation
 \begin{equation}
 \label{e:ibvplimit}
 \left\{
 \begin{array}{lll}
 \partial_{t}(\rho u)+\mathrm{div}(\rho  u b)=0 & \text{in} \ (0,T)\times \Omega \\
  u=\overline{u} &  \text{at $t=0$}\\ 
  u=\overline{g} &  \text{on}\ \Gamma^{-}.
 \end{array}
 \right.
 \end{equation}
If  $u_m, \bar u \in L^\infty (\Omega)$ and $ \overline{g}_{n}, \bar g \in L^\infty (\Gamma)$ satisfy
\begin{equation}
 \overline{u}_{n} \stackrel{\ast}{\rightharpoonup} \overline{u}\ \text{weak-$^\ast$ in}\ L^{\infty}(\Omega),
 \label{stability-7}
 \end{equation}
 \begin{equation}
 \overline{g}_{n}   \stackrel{\ast}{\rightharpoonup}  \overline{g}   \;
\text{weak-$^\ast$ in}\ L^{\infty}(\Gamma) ,
 \label{stability-8}
 \end{equation}
then 
 \begin{equation}
 \rho_{n} u_{n} \stackrel{*}{\rightharpoonup}
  \rho u \ \text{weak-* in}\ L^{\infty}((0,T) \times \Omega)
 \label{stability-10}
 \end{equation} 
  and
  \begin{equation}
 \emph{Tr}(\rho_{n} u_{n} b_{n}) \stackrel{*}{\rightharpoonup} 
 \emph{Tr}(\rho u b) \ \text{weak-* in}\ L^{\infty}(\Gamma).
 \label{stability-9}
 \end{equation}
 \end{theorem}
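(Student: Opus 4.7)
The plan is to combine weak-$*$ compactness with the uniqueness of $\rho u$ provided by Lemma~\ref{l:uni}. First I extract weak-$*$ limits of $\rho_n u_n$ and of the normal traces along a subsequence; then I pass to the limit in the distributional formulation of~\eqref{e:ibvpapp} to show that the limit corresponds to the (essentially unique) solution of~\eqref{e:ibvplimit}; finally I conclude by uniqueness that the whole sequence converges, not merely a subsequence.

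From the maximum principle~\eqref{e:maxprin} I have $\|u_n\|_\infty \leq M := \sup_n\max\{\|\bar u_n\|_\infty,\|\bar g_n\|_\infty\}$, which is finite because weak-$*$ convergent sequences are uniformly bounded. Combined with~\eqref{e:maxprintraces2}, this yields weak-$*$ precompactness of $\{\rho_n u_n\}$ in $L^\infty((0,T)\times\Omega)$ and of $\{\mathrm{Tr}(\rho_n u_n b_n)\}$ in $L^\infty(\Gamma)$; call the limits $v$ and $\tau$. I then pass to the limit in the distributional identity for $u_n$: every term is a product of a weak-$*$ convergent factor against an $L^1$-convergent one, using~\eqref{stability-3} for $b_n\cdot\nabla\psi\to b\cdot\nabla\psi$, the second relation in~\eqref{stability-5} paired with~\eqref{stability-8} for the trace term, and~\eqref{stability-4} together with~\eqref{stability-7} for the initial term. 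Hence $v$ satisfies the limit continuity equation with initial datum $\rho_0\bar u$ and boundary trace $\tau$.

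Next I identify $v$ as $\rho u$ for some $u\in L^\infty$. The pointwise inequality $|\rho_n u_n|\leq M\rho_n$ passes to the weak-$*$ limit against nonnegative test functions (using~\eqref{stability-3}), forcing $|v|\leq M\rho$ a.e.; hence $u:=v/\rho$ on $\{\rho>0\}$ (and $0$ elsewhere) lies in $L^\infty$ with $v=\rho u$. Applying Lemma~\ref{trace-existence} to this $u$ and matching the resulting identity against the limit equation gives $(\rho u)_0=\rho_0\bar u$ and $\mathrm{Tr}(\rho u b)=\tau$. To verify that $u$ solves~\eqref{e:ibvplimit} it remains to check $\tau=\bar g\,\mathrm{Tr}(\rho b)$ on $\Gamma^-$. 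For this I use the boundary condition $\mathrm{Tr}(\rho_n u_n b_n)=\bar g_n\,\mathrm{Tr}(\rho_n b_n)$ on $\Gamma_n^-$ (together with the fact, from Theorem~\ref{trace-renorm}, that $\mathrm{Tr}(\rho_n u_n b_n)=0$ on $\Gamma_n^0$), test against an arbitrary $\phi\in C^\infty_c(\Gamma^-)$, and exploit Lemma~\ref{l:traces} — which gives $\mathbf{1}_{\Gamma_n^-}\to\mathbf{1}_{\Gamma^-}$ and $\mathbf{1}_{\Gamma_n^+}\to 0$ on $\Gamma^-$ — combined with~\eqref{stability-5} and~\eqref{stability-8}, to pass to the limit. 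Once $u$ is a distributional solution of~\eqref{e:ibvplimit}, Lemma~\ref{l:uni} pins down $v=\rho u$ uniquely, so every subsequential limit coincides and the whole sequence converges, yielding~\eqref{stability-10} and~\eqref{stability-9}.

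The main obstacle will be the boundary identification $\tau=\bar g\,\mathrm{Tr}(\rho b)$ on $\Gamma^-$: it requires simultaneously handling the weak-$*$ convergence of $\bar g_n$, the only-$L^1$ convergence of $\mathrm{Tr}(\rho_n b_n)$, and the fact that the subsets $\Gamma_n^-$ may differ from $\Gamma^-$ up to a set asymptotically contained in $\Gamma^0$. Lemma~\ref{l:traces} is the key technical device that converts this mixed convergence into a genuine strong-$\times$-weak-$*$ pairing, while the fact that $\mathrm{Tr}(\rho b)=0$ on $\Gamma^0$ ensures that the contribution of the limiting set $\Gamma'$ does not pollute the identification.
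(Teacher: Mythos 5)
Your proposal is correct and follows essentially the same route as the paper: extract subsequential weak-$*$ limits, pass to the limit in the distributional identities, identify the boundary datum on $\Gamma^-$ via Lemma~\ref{l:traces} and the vanishing of $\mathrm{Tr}(\rho b)$ on $\Gamma^0$, and conclude with full-sequence convergence through the uniqueness Lemma~\ref{l:uni}. Two small variations are worth noting. First, you take a weak-$*$ limit $v$ of $\rho_n u_n$ and reconstruct $u := v/\rho$ from the bound $|v|\leq M\rho$, whereas the paper (after redefining $u_n$ on $\{\rho_n=0\}$ so that the maximum principle holds) extracts a weak-$*$ limit $r_1$ of $u_n$ directly and then works with $\rho r_1$; the two are equivalent, and the paper's choice saves the small step of verifying absolute continuity of $v$ with respect to $\rho$ (your argument for $|v|\le M\rho$ does go through, because the order relations $-M\rho_n \le \rho_n u_n \le M\rho_n$ pass to weak-$*$ limits against nonnegative test functions, and you should also say a word about why $\|u_n\|_\infty\le M$ can be assumed even though $u_n$ is only prescribed up to its values on $\{\rho_n=0\}$). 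Second, for the boundary identification you test against $\phi\in C^\infty_c(\Gamma^-)$ and kill the $\Gamma^0_n$ contribution by observing (via Theorem~\ref{trace-renorm} applied, e.g., with $h(s)=s^2$) that $\mathrm{Tr}(\rho_n u_n b_n)=0$ on $\Gamma^0_n$; the paper instead carries that term along in the global decomposition~\eqref{e:decompo} and cancels it at the very end via an algebraic identity, and indeed explicitly remarks that your route is available but ``not needed.'' Both devices buy the same thing: yours localizes to $\Gamma^-$ and is arguably more transparent, while the paper's avoids invoking trace renormalization at this stage. No genuine gap.
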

 Note that in the statement of the above theorem $\overline{g}_m$ and $\overline{g}$ are functions defined on the whole $\Gamma$, although the values of $\rho_m u_m$ and $\rho u$ are only determined by their values on $\Gamma^-_m$ and $\Gamma^-$, respectively. 
 \begin{proof}
 We proceed according to the following steps. \\
 {\sc Step 1:} we apply Theorem~\ref{IBVP-NC} and we infer that the function $\rho_n u_n$ satisfying~\eqref{e:ibvpapp} is unique. Also, without loss of generality, we can redefine the function $u_n$ on the set $\{\rho_n=0\}$  in such a way that $u_n$ satisfies the maximum principle~\eqref{e:maxprin}. Owing to~\eqref{stability-9}, the sequences $\| \overline{u}_m \|_{L^\infty}$ and $\| \overline{g}_m \|_{L^\infty}$ are both uniformly bounded and by the maximum principle so is $\| u_m \|_{L^\infty}$. Also, by combining~\eqref{e:maxprintraces2} and~\eqref{stability-2} we infer that the sequence $\Vert \text{Tr}(\rho_{n} b_{n} u_{n}) \Vert_{\infty} $ is also uniformly bounded. We conclude that, up to subsequences (which we do not label to simplify the notation), we have 
 \begin{equation}
 \begin{aligned}
 & u_{n} \stackrel{*}{\rightharpoonup} r_{1} \ \text{weak-* in} \ L^{\infty}((0,T) \times \Omega),\\
 & \text{Tr}(\rho_{n}  u_{n} b_{n}) \stackrel{*}{\rightharpoonup} r_{2} \ \text{weak-* in} \ L^{\infty}(\Gamma)
 \end{aligned}
 \label{stability-11}
 \end{equation}
 for some $r_{1} \in L^{\infty}((0,T) \times \Omega)$ and $r_{2} \in L^{\infty}(\Gamma)$.
 By using \eqref{iden-2} and \eqref{iden-3}, we get that
 \begin{equation}
 \int_{0}^{T} \int_{\Omega} \rho r_{1} (\partial_{t} \phi+b \cdot \nabla \phi)\ dx dt=0, \quad  \forall \phi \in C^{\infty}_{c}((0,T) \times \Omega), 
 \label{stability-12}
 \end{equation}
 and
 \begin{equation}
 \int_{0}^{T} \int_{\Omega} \rho r_{1} (\partial_{t} \psi+b  \nabla \psi)\ dx dt= \int_{0}^{T} \int_{\partial \Omega} r_{2} \psi\ d\mathcal{H}^{d-1} dt -\int_{\Omega} \psi(0,\cdot) {\rho}_0\  \overline{u}\ dx ,\ \forall \psi \in C^{\infty}_{c}([0,T) \times \mathbb{R}^{d}).
 \label{stability-13}
 \end{equation}
 From Lemma \ref{trace-existence}, it also follows that 
 \begin{equation}
 r_{2}= \text{Tr}(\rho r_{1} b). 
 \label{stability-14}
 \end{equation}
Assume for the time being that we have established the equality
\begin{equation}
\label{e:whatww}
  r_{2}=\overline{g} \text{Tr}(\rho b), \quad \text{on}\ \Gamma^{-} ,
\end{equation}
then by recalling~\eqref{stability-14} and the uniqueness part in Theorem~\ref{IBVP-NC} we conclude that $r_1= \rho u$ and $r_2 = \text{Tr}(\rho bu) $. Owing to~\eqref{stability-11}, this concludes the proof of the theorem. \\
{\sc Step 2:} we establish~\eqref{e:whatww}.  First, we decompose $\text{Tr}(\rho_m u_m b_m) 
$ as
\begin{equation}
\label{e:decompo}
\begin{split}
   \text{Tr}(\rho_n u_n b_n)  &= \text{Tr}(\rho_n u_n b_n)
    \mathbf{1}_{\Gamma^{-}_{n}}+\text{Tr}(\rho_n u_n b_n)   \mathbf{1}_{\Gamma^{+}_{n}} + \text{Tr}(\rho_n u_n b_n)   \mathbf{1}_{\Gamma^{0}_{n}}
    \\
    & = \overline{g}_n  \text{Tr}(\rho_n  b_n)
     \mathbf{1}_{\Gamma^{-}_{n}}+\text{Tr}(\rho_n u_n b_n)   \mathbf{1}_{\Gamma^{+}_{n}}  + \text{Tr}(\rho_n u_n b_n)   \mathbf{1}_{\Gamma^{0}_{n}}  ,
     \end{split}
\end{equation}
where $\Gamma^-_n$, $\Gamma^+_n$ and $\Gamma^0_n$ are defined as in~\eqref{e:gamma}. 
By using Lemma~\ref{trace-renorm} (trace renormalization), one could actually prove that the last term in the above expression is actually $0$. This is actually not needed here. Indeed, it suffices to recall~\eqref{stability-5} and Lemma~\ref{l:traces} and point out that by combining~\eqref{e:convchar1} and~\eqref{e:convchar2} we get
\begin{equation}
\label{e:conchar3} 
   \mathbf{1}_{\Gamma^{0}_{n}} \to \mathbf{1}_{\Gamma^0} 
   - \mathbf{1}_{\Gamma'} - \mathbf{1}_{\Gamma''}. 
\end{equation}
Next, we recall that 
 the sequence $\| \text{Tr}(\rho_n u_n b_n)\|_{L^\infty}$ is uniformly bounded owing to the uniform bounds on $\| \rho_n \|_{L^\infty}$ and $\| u_n \|_{L^\infty}$. By recalling~\eqref{stability-8}, we conclude that 
 \begin{equation}
 \label{e:conv1}
 \overline{g}_n  \text{Tr}(\rho_n  b_n)
     \mathbf{1}_{\Gamma^{-}_{n}}  \stackrel{*}{\rightharpoonup} 
     \overline{g}  \, \text{Tr}(\rho  b)
     \Big( \mathbf{1}_{\Gamma^{-}} + \mathbf{1}_{\Gamma'} \Big)
     \qquad \text{weak-* in $L^\infty (\Gamma)$.}
 \end{equation}
 By recalling that $\Gamma' \subseteq \Gamma^0$ we get that $\text{Tr}(\rho  b)
 \mathbf{1}_{\Gamma'} =0$. We now pass to the weak star limit in~\eqref{e:decompo} and using~\eqref{e:convchar1},~\eqref{e:convchar2},~\eqref{stability-11},~\eqref{stability-8} and~\eqref{e:conv1} we get 
\begin{equation}
\label{e:conv2}
  r_2 =  \overline{g} \text{Tr}(\rho  b) 
   \mathbf{1}_{\Gamma^{-}}  + r_2 
    \Big( \mathbf{1}_{\Gamma^{+}} + \mathbf{1}_{\Gamma'} \Big)+ r_2 
     \Big( \mathbf{1}_{\Gamma^{0}} - \mathbf{1}_{\Gamma'} -\mathbf{1}_{\Gamma''}  \Big),
\end{equation}
which owing to the properties 
$$
   \Gamma^- \cap \Gamma^{0}= \emptyset, 
   \quad \Gamma^- \cap \Gamma'=\emptyset, \quad 
   \Gamma^- \cap \Gamma''= \emptyset 
 $$  
implies~\eqref{e:whatww}. This concludes the proof Theorem~\ref{stability-weak}. 
\end{proof} 
 \begin{theorem}\label{stability-strong}
 Under the same assumptions as in Theorem~\ref{stability-weak}, if we furthermore assume that
 \begin{equation}
 \overline{u}_{n} \xrightarrow[n \rightarrow \infty]{} \overline{u}\ \text{strongly in}\ L^{1}(\Omega),
 \label{stability-35}
 \end{equation}
 \begin{equation}
 \overline{g}_{n} \xrightarrow[n \rightarrow \infty]{} \overline{g} \ \text{strongly in}\ L^{1}(\Gamma) ,
 \label{stability-36}
 \end{equation}
 then we get 
 \begin{equation}
 \begin{aligned}
 &\rho_{n} u_{n} \xrightarrow[n \rightarrow \infty]{} \rho u \ \text{strongly in}\ L^{1}((0,T) \times \Omega),\\
 &\emph{Tr}(\rho_{n} u_{n} b_{n}) \xrightarrow[n \rightarrow \infty]{} \emph{Tr}(\rho u b) \ \text{strongly in}\ L^{1}(\Gamma).
 \end{aligned}  
 \label{stability-37}
 \end{equation}
 \end{theorem}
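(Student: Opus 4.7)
The strategy is to combine Theorem~\ref{stability-weak} with a renormalization argument at the level of $u_n^2$. First observe that $u_n^2$ is itself a distributional solution (in the sense of Definition~\ref{d:distrsol}) of the nearly incompressible problem associated to $(b_n, \rho_n)$, with initial datum $\overline{u}_n^2$ and boundary datum $\overline{g}_n^2$. The interior equation follows from Lemma~5.10 of~\cite{Delellis1}, as in the derivation of~\eqref{e10}; the identification of the initial and boundary data is obtained from Theorem~\ref{trace-renorm} applied with $h(x) = x^2$, $B = (\rho_n, \rho_n b_n)$ and $w = u_n$, exactly as in~\eqref{e12}. Since $\|\overline{u}_n\|_{L^\infty}$ and $\|\overline{g}_n\|_{L^\infty}$ are uniformly bounded by hypothesis, the strong $L^1$ convergences~\eqref{stability-35}--\eqref{stability-36} upgrade to $\overline{u}_n^2 \to \overline{u}^2$ and $\overline{g}_n^2 \to \overline{g}^2$ strongly in $L^1$. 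Applying Theorem~\ref{stability-weak} to the sequence $\{u_n^2\}$, and invoking the uniqueness part of Theorem~\ref{IBVP-NC} to identify the weak limit with $u^2$, I obtain
\[
\rho_n u_n^2 \weaks \rho u^2 \quad \text{in } L^\infty((0,T) \times \Omega), \qquad \text{Tr}(\rho_n u_n^2 b_n) \weaks \text{Tr}(\rho u^2 b) \quad \text{in } L^\infty(\Gamma).
\]

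For the bulk convergence I would expand
\[
\int_0^T \! \! \int_\Omega \rho_n (u_n - u)^2 \, dx \, dt = \int_0^T \! \! \int_\Omega \rho_n u_n^2 \, dx \, dt - 2 \int_0^T \! \! \int_\Omega (\rho_n u_n)\, u \, dx \, dt + \int_0^T \! \! \int_\Omega \rho_n u^2 \, dx \, dt.
\]
Testing the weak-$\ast$ convergence of $\rho_n u_n^2$ against the constant $1$, the weak-$\ast$ convergence $\rho_n u_n \weaks \rho u$ obtained from Theorem~\ref{stability-weak} against $u \in L^1$, and using $\rho_n \to \rho$ strongly in $L^1$ together with $u \in L^\infty$, each of the three integrals converges to $\int \rho u^2$, so the whole left-hand side tends to $0$. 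Decomposing $\rho_n u_n - \rho u = \rho_n (u_n - u) + u(\rho_n - \rho)$ and applying Cauchy--Schwarz to the first summand (together with the uniform $L^\infty$ bound on $\rho_n$ on the bounded domain), while controlling the second by $\|u\|_{L^\infty} \|\rho_n - \rho\|_{L^1}$, yields $\rho_n u_n \to \rho u$ strongly in $L^1((0,T) \times \Omega)$.

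The trace convergence follows a parallel path. Applying Theorem~\ref{trace-renorm} with $h(x) = x^2$, $B = (\rho_n, \rho_n b_n)$ and $w = u_n$, and separately with $h(x) = x$ to deal with the set where the denominator vanishes, I obtain the pointwise identity
\[
[\text{Tr}(\rho_n u_n b_n)]^2 = \text{Tr}(\rho_n u_n^2 b_n) \cdot \text{Tr}(\rho_n b_n) \qquad \text{a.e. on } \Gamma,
\]
with both sides equal to zero wherever $\text{Tr}(\rho_n b_n) = 0$. Since $\text{Tr}(\rho_n u_n^2 b_n)$ is uniformly bounded in $L^\infty(\Gamma)$ and converges weak-$\ast$ to $\text{Tr}(\rho u^2 b)$, whereas $\text{Tr}(\rho_n b_n) \to \text{Tr}(\rho b)$ strongly in $L^1(\Gamma)$ by~\eqref{stability-5}, the weak-$\ast$ times strong-$L^1$ combination gives
\[
\int_\Gamma [\text{Tr}(\rho_n u_n b_n)]^2 \, d\mathcal{H}^{d-1} dt \;\longrightarrow\; \int_\Gamma \text{Tr}(\rho u^2 b)\, \text{Tr}(\rho b) \, d\mathcal{H}^{d-1} dt = \int_\Gamma [\text{Tr}(\rho u b)]^2 \, d\mathcal{H}^{d-1} dt.
\]
Together with the weak convergence $\text{Tr}(\rho_n u_n b_n) \weaks \text{Tr}(\rho u b)$ in $L^2(\Gamma)$ (which is available since $\Gamma$ has finite measure and the sequence is uniformly bounded in $L^\infty$), the convergence of $L^2$-norms upgrades to strong convergence in $L^2(\Gamma)$, hence to strong convergence in $L^1(\Gamma)$.

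The main technical subtlety is the handling of the renormalization identity on the set $\{\text{Tr}(\rho_n b_n) = 0\}$, where the quotient appearing in Theorem~\ref{trace-renorm} is not \emph{a priori} defined. This is resolved by a preliminary application of Theorem~\ref{trace-renorm} with the linear choice $h(x) = x$, which forces $\text{Tr}(\rho_n u_n b_n) = 0$ on that set and renders both sides of the squared identity trivially equal. All remaining steps are routine combinations of weak-$\ast$ convergence, strong $L^1$ convergence, and uniform $L^\infty$ bounds.
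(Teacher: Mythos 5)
Your proof is correct and follows essentially the same route as the paper: you promote $u_n^2$ to a distributional solution via renormalization (Lemma~5.10 of~\cite{Delellis1} inside the domain, Theorem~\ref{trace-renorm} at the boundary and initial time), apply Theorem~\ref{stability-weak} to obtain weak-$\ast$ convergence of $\rho_n u_n^2$ and of $\mathrm{Tr}(\rho_n u_n^2 b_n)$, and then upgrade weak to strong convergence by exploiting the identity $[\mathrm{Tr}(\rho_n u_n b_n)]^2 = \mathrm{Tr}(\rho_n u_n^2 b_n)\,\mathrm{Tr}(\rho_n b_n)$ together with the strong $L^1$ (hence $L^2$, by the uniform $L^\infty$ bound) convergence of $\mathrm{Tr}(\rho_n b_n)$. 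Your treatment of the degenerate set $\{\mathrm{Tr}(\rho_n b_n)=0\}$ via the linear renormalization $h(x)=x$ is a valid and explicit way to justify that both sides of the squared trace identity vanish there.

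The one place where you diverge slightly from the paper is the bulk convergence. You expand $\int \rho_n(u_n-u)^2 \to 0$ and then split $\rho_n u_n - \rho u = \rho_n(u_n-u) + u(\rho_n-\rho)$, applying Cauchy--Schwarz to the first term. The paper instead notes that $\rho_n u_n \rightharpoonup \rho u$ weakly in $L^2$ and that $\rho_n^2 u_n^2 = (\rho_n u_n)^2 \rightharpoonup (\rho u)^2$ weakly in $L^2$ (obtained by multiplying the weak-$\ast$ limit $\rho_n u_n^2 \weaks \rho u^2$ by the strongly convergent $\rho_n$), which gives convergence of the $L^2$ norm of $\rho_n u_n$ and hence strong $L^2$ (therefore $L^1$) convergence directly. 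Both manipulations are correct and of the same difficulty; yours avoids having to pass through $\rho_n^2 u_n^2$ at the price of the extra decomposition. One small point worth making explicit: to invoke uniqueness and identify the weak limit of $\rho_n u_n^2$ with $\rho u^2$, you need (and implicitly use) that $u^2$ is itself a distributional solution of the limit problem with data $\overline{u}^2$, $\overline{g}^2$; you state this only for $u_n^2$, so that sentence should also mention the limit solution $u$.
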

 \begin{proof}
First, we point out that the first equation 
 in~\eqref{stability-9} implies that 
\begin{equation}
 \label{e:ell2}
  \rho_n u_m
     {\rightharpoonup} \rho {u}\ \text{weakly in}\ L^{2}((0, T) \times \Omega ). 
\end{equation} 
Next, by using Lemma \ref{trace-renorm} (trace-renormalization property), we get that $\rho_m u^{2}_{n}$ and $\rho u^{2}$ satisfy (in the sense of distributions) 
 \begin{equation}
 \left\{
 \begin{array}{lll}
 \partial_{t}(\rho_{n} u_{n}^{2})+\text{div}(\rho_{n} u_{n}^{2} b_{n})=0 &
  \text{in} \ (0,T)\times \Omega \\
  u_{n}^{2}=\overline{u}_{n}^{2} & \text{at $t=0$}\\ 
 u^{2}_{n} =\overline{g}^{2}_{n}  &  \text{on}\ \Gamma_{n}^{-}, \\
 \end{array}
 \right.
 \notag
 \end{equation}
 and
 \begin{equation}
 \left\{
 \begin{array}{lll}
 \partial_{t}(\rho u^{2})+\text{div}(\rho u^{2} b)=0 &
  \text{in} \ (0,T)\times \Omega \\
  u^{2}=\overline{u}^{2} & \text{at $t=0$} \\ 
  u^{2} =\overline{g}^{2} &  \text{on}\ \Gamma^{-}, \\
 \end{array}
 \right.
 \notag
 \end{equation}
 respectively. Also, by combinig~\eqref{stability-7},\eqref{stability-8}, \eqref{stability-35} and \eqref{stability-36}, we get that 
 \begin{equation}
 \overline{u}^2_{n} \stackrel{*}{\rightharpoonup} \overline{u}^2\ \text{weak-$^\ast$ in}\ L^{\infty}(\Omega), \qquad 
 \overline{g}^2_{n}   \stackrel{*}{\rightharpoonup}  \overline{g}^2   \;
\text{weak-$^\ast$ in}\ L^{\infty}(\Gamma)
\notag
 \end{equation}
 and by applying Theorem~\ref{stability-weak} to $\rho_m u_m^2$ we conclude that 
 $$
     \rho_m u_m^2
     \stackrel{*}{\rightharpoonup} \rho {u}^2\ \text{weak-$^\ast$ in}\ L^{\infty}((0, T) \times \Omega ) 
 $$ 
 and that 
 \begin{equation}
 \label{e:convbur}
        \text{Tr}(\rho_{n}  u^2_{n} b_{n})
         \stackrel{*}{\rightharpoonup}
         \text{Tr}(\rho  u^2 b)
        \ \text{weak-$^\ast$ in}\ L^{\infty}(\Gamma ). 
 \end{equation}
 Since the sequence $\| \rho_m \|_{L^\infty}$ is uniformly bounded, then 
by recalling~\eqref{stability-3} we get 
$$
     \rho^2_m u_m^2
     \stackrel{*}{\rightharpoonup} \rho^2 {u}^2\ \text{weak-$^\ast$ in}\ L^{\infty}((0, T) \times \Omega )
 $$ 
 and hence 
 \begin{equation}
 \label{e:square}
     \rho^2_m u_m^2
     {\rightharpoonup} \rho^2 {u}^2\ \text{weakly in}\ L^2((0, T) \times \Omega ).
 \end{equation}
 By combining~\eqref{e:ell2} and~\eqref{e:square} we get that 
 $\rho^2_m u_m^2 \longrightarrow \rho u$ strongly in
 $L^2((0, T) \times \Omega )$ and this implies the first convergence in~\eqref{stability-37}.
 
 Next, we establish the second convergence in $L^2((0, T) \times \Omega )$. Since $\Gamma $ is a set of finite measure, from \eqref{stability-9} and \eqref{e:convbur} we can infer that
 \begin{equation}
 \begin{aligned}
 & \text{Tr}(\rho_{n}  u_{n} b_{n}) \rightharpoonup \text{Tr}(\rho  u b) \ \text{weakly in} \ L^{2}(\Gamma),\\
 &  \text{Tr}(\rho_{n}  u^{2}_{n} b_{n}) \rightharpoonup \text{Tr}(\rho  u^{2} b) \ \text{weakly in} \ L^{2}(\Gamma).
 \end{aligned}
 \label{stability-39}
 \end{equation}
 By using the uniform bounds for $\Vert \text{Tr}(\rho_{n} b_{n})\Vert_{\infty} $, we infer from the $L^{1}$ convergence of $\text{Tr}(\rho_{n} b_{n}) $ to $\text{Tr}(\rho b)$ that
 \begin{equation}
 \text{Tr}(\rho_{n} b_{n}) \xrightarrow[n \rightarrow \infty]{} \text{Tr}(\rho b) \ \text{strongly in}\ L^{2}(\Gamma).
 \label{stability-40}
 \end{equation}
Next, we apply Lemma \ref{trace-renorm} (trace renormalization property) and we get that
 \begin{equation}
 [\text{Tr}(\rho_{n} u_{n} b_{n})]^{2}= \left[\frac{\text{Tr}(\rho_{n} u_{n} b_{n})}{\text{Tr}(\rho_{n} b_{n})} \right]^{2} [\text{Tr}(\rho_{n} b_{n})]^{2}= \text{Tr}(\rho_{n} u_{n}^{2} b_{n}) \text{Tr}(\rho_{n}b_{n})
 \notag
 \end{equation}
 and
 \begin{equation}
 [\text{Tr}(\rho u b)]^{2}= \left[\frac{\text{Tr}(\rho u b)}{\text{Tr}(\rho b)} \right]^{2} [\text{Tr}(\rho b)]^{2}= \text{Tr}(\rho u^{2} b) \text{Tr}(\rho b).
 \notag
 \end{equation}
 From \eqref{stability-39} and \eqref{stability-40}, we can then conclude that
 \begin{equation}
 [\text{Tr}(\rho_{n} u_{n} b_{n})]^{2} \rightharpoonup [\text{Tr}(\rho u b)]^{2} \ \text{weakly in}\ L^{2}(\Gamma),
 \label{stability-41}
 \end{equation}
 and by recalling~\eqref{stability-39} the second convergence in \eqref{stability-37} follows.          
 \end{proof}
Finally, we establish space-continuity properties of the vector field $(\rho u, \rho u b)$
similar to those established in~\cite{Boyer,CDS1}.
 \begin{theorem}\label{space-continuity}
Under the same assumptions as in Theorem~\ref{IBVP-NC}, let $P$ be the vector field $P : = ( \rho, \rho b)$, $u$ be a distributional solution of~\eqref{prob-2} and
 $\{\Sigma_{r} \}_{r \in I} \subseteq \mathbb R^d$ be a family of graphs as in  Definition \ref{graph}. 
Also, fix $r_{0} \in I $ and let $\gamma_{0}, \gamma_{r}: (0,T) \times D \rightarrow \mathbb{R} $ be defined by 
 \begin{equation}
 \begin{aligned}
 \gamma_{0}(t,x_{1},\cdots,x_{d-1})&:= \emph{Tr}^{-}(uP,(0,T)\times \Sigma_{r_{0}})(t,x_{1},\cdots,x_{d-1},f(x_{1},\cdots,x_{d-1})-r_{0}),\\
 \gamma_{r}(t,x_{1},\cdots,x_{d-1})&:=\emph{Tr}^{+}(uP,(0,T) \times \Sigma_{r})(t,x_{1},\cdots,x_{d-1},f(x_{1},\cdots,x_{d-1})-r) .
 \end{aligned}
 \label{space1}
 \end{equation}
 Then $\gamma_{r} \rightarrow \gamma_{0}$ strongly in $L^{1}((0,T)\times D) $ as $r \rightarrow r^{+}_{0} $.
 \end{theorem}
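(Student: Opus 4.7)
\emph{Strategy.} I will follow the same two-step template used in the proof of Theorem~\ref{stability-strong}. First I establish the weak-$*$ convergence $\gamma_r \weaks \gamma_0$ in $L^\infty$ by applying the space-continuity result Theorem~\ref{Weak-continuity} to the vector field $uP$. Then I upgrade to strong $L^1$ convergence by proving $\|\gamma_r\|_{L^2}^2 \to \|\gamma_0\|_{L^2}^2$ via a renormalization argument applied to $u^2 P$; combined with weak $L^2$ convergence and the Hilbert-space identity $\|\gamma_r - \gamma_0\|_{L^2}^2 = \|\gamma_r\|_{L^2}^2 - 2\langle \gamma_r, \gamma_0\rangle_{L^2} + \|\gamma_0\|_{L^2}^2$, this will give strong $L^2$, hence strong $L^1$, convergence on the finite-measure set $(0,T)\times D$.

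\emph{Weak convergence step.} The vector field $Q := uP = (\rho u, \rho u b)$ has vanishing distributional divergence on $(0,T)\times\Omega$ by~\eqref{iden-2}. Extending $Q$ by zero outside $(0,T)\times\Omega$ as in Lemma~\ref{extension}, we obtain $Q \in \mathcal{M}_\infty(\mathbb{R}^{d+1})$. Since $\{(0,T)\times\Sigma_r\}_{r\in I}$ is a family of graphs in $\mathbb{R}^{d+1}$ (parametrised by $(0,T)\times D$ with the same Lipschitz function $f$ extended constantly in the $t$-variable), Theorem~\ref{Weak-continuity} yields $\gamma_r \weaks \gamma_0$ weakly-$*$ in $L^\infty((0,T)\times D)$ as $r\to r_0^+$. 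The same argument applied to $u^2 P$, which is measure-divergence by~\cite[Lemma~5.10]{Delellis1} combined with Lemma~\ref{extension}, gives the weak-$*$ convergence of $\mathrm{Tr}^+(u^2 P,(0,T)\times\Sigma_r)$ to $\mathrm{Tr}^-(u^2 P,(0,T)\times\Sigma_{r_0})$ in $L^\infty((0,T)\times D)$.

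\emph{Renormalization identity and strong trace convergence.} Applying Theorem~\ref{trace-renorm} with $w = u$, $h(s) = s^2$ and $B = P$ yields the pointwise identity
\[
\gamma_r^2 = \mathrm{Tr}^+(u^2 P, (0,T)\times\Sigma_r) \cdot \mathrm{Tr}^+(P, (0,T)\times\Sigma_r)
\]
a.e.~on $(0,T)\times D$, and analogously for $\gamma_0^2$. The main obstacle is then to establish the strong $L^1$ convergence
\[
\mathrm{Tr}^+(P, (0,T)\times\Sigma_r) \;\longrightarrow\; \mathrm{Tr}^-(P, (0,T)\times\Sigma_{r_0}) \quad\text{in }L^1((0,T)\times D).
\]
Since $P = (\rho, \rho b) \in BV((0,T)\times\Omega; \mathbb{R}^{d+1}) \cap L^\infty$, the identity~\eqref{e:equal} identifies these normal traces with BV-traces of $P$ on the Lipschitz hypersurfaces $(0,T)\times\Sigma_r$. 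Reparametrising each $\Sigma_r$ via the map $D \ni x' \mapsto (x', f(x')-r)$ reduces the question to the $L^1$-continuity of one-sided traces of the BV function $\widetilde P(t, x', \tau) := P(t, x', f(x')-\tau)$ along the family of parallel hyperplanes $\{\tau = r\}$; this follows from the one-dimensional slicing property of $BV$ in the $\tau$-variable combined with $L^1$-continuity of translations.

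\emph{Conclusion.} The product of a weak-$*$ convergent sequence in $L^\infty$ with a strongly $L^1$-convergent sequence converges weakly in $L^1$, so $\gamma_r^2 \to \gamma_0^2$ weakly in $L^1((0,T)\times D)$. Pairing with the constant function gives $\|\gamma_r\|_{L^2}^2 \to \|\gamma_0\|_{L^2}^2$. Combined with the weak $L^2$ convergence of $\gamma_r$ to $\gamma_0$ (inherited from the weak-$*$ $L^\infty$ convergence on the finite-measure set $(0,T)\times D$), the Hilbert-space identity above gives strong $L^2$ convergence of $\gamma_r$ to $\gamma_0$, which implies the desired strong $L^1$ convergence.
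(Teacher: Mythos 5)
The paper itself omits the proof and refers to \cite[Proposition 3.5]{CDS1}; your proposal reconstructs that omitted argument along the expected lines, and it also mirrors the two-step template the paper explicitly uses in the proof of Theorem~\ref{stability-strong}. The main ingredients are the same: (i) weak-$*$ convergence of $\gamma_r$ (and of $\mathrm{Tr}^{\pm}(u^2P,\cdot)$) from Theorem~\ref{Weak-continuity} applied to the divergence-free extensions; (ii) the pointwise renormalization identity $\mathrm{Tr}(uP)^2=\mathrm{Tr}(u^2P)\,\mathrm{Tr}(P)$ from Theorem~\ref{trace-renorm}; (iii) strong $L^1$ convergence of $\mathrm{Tr}^{\pm}(P,(0,T)\times\Sigma_r)$ exploiting $P=(\rho,\rho b)\in BV$; and (iv) the Hilbert-space upgrade $\|\gamma_r\|_{L^2}\to\|\gamma_0\|_{L^2}$ plus weak $L^2$ convergence implies strong $L^2$ (hence $L^1$) convergence. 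This is correct, and it is also the place where the $BV$ hypothesis on $\rho$ (not only on $b$) is genuinely used, which you identify.

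Two small points worth tightening. First, when you apply Theorem~\ref{trace-renorm} you should also record (via the same theorem with $h(s)=s$) that $\mathrm{Tr}(uP)=0$ wherever $\mathrm{Tr}(P)=0$, so that the identity $\gamma_r^2=\mathrm{Tr}^+(u^2P,\cdot)\,\mathrm{Tr}^+(P,\cdot)$ holds on all of $(0,T)\times D$ and not merely where the denominator is nonzero. Second, the phrase ``one-dimensional slicing property of $BV$ combined with $L^1$-continuity of translations'' is a bit loose: the precise statement you need is that after the bi-Lipschitz flattening $(t,x',\tau)\mapsto(t,x',f(x')-\tau)$, the slice map $\tau\mapsto\widetilde P(\cdot,\cdot,\tau)$ is a $BV$ curve in $L^1((0,T)\times D)$, so its one-sided limits exist and satisfy $\|\widetilde P(\cdot,r^-)-\widetilde P(\cdot,r_0^+)\|_{L^1}\le|D_\tau\widetilde P|\bigl((0,T)\times D\times(r_0,r)\bigr)\to 0$; the normal traces are then obtained by pairing with the fixed ($r$-independent) normal $\vec n_\Sigma(x')$ via~\eqref{e:equal}. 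Stated this way the step is airtight. Neither issue is a gap; the proof is correct as a strategy and essentially complete.
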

The proof of the above result follows the same strategy as the proof of~\cite[Proposition 3.5]{CDS1} and is therefore omitted.
 \section{Applications to the Keyfitz and Kranzer system}
 \label{s:KK}
 In this section, we consider the initial-boundary value problem for the Keyfitz and Kranzer system~\cite{KK} of conservation laws in several space dimensions, namely 
 \begin{equation}
 \left\{
 \begin{array}{lll}
 \partial_{t} U+\displaystyle{ 
 \sum_{i=1}^{d} \partial_{x_{i}} (f^{i}(\vert U \vert) U)=0}
 & \text{in}\ (0,T) \times \Omega  \\
 U = U_{0} &  \text{at $t=0$} \\
 U = U_{b} & \text{on} \ \Gamma.
 \displaystyle{\phantom{\int}}  \\
 \end{array}
 \right.
 \label{KK1}
 \end{equation}
 Note that, in general, we cannot expect that the boundary datum is pointwise attained on the whole boundary $\Gamma$. We come back to this point in the following. 
  
 We follow the same approach as in~\cite{ABD,AD,Br,Delellis2} and we formally split the equation at the first line of~\eqref{KK1} as the coupling between a scalar conservation law and a linear transport equation. More precisely, we set  $F:=(f^{1},\cdots,f^{d})$ and we point out that the modulus 
 $\rho: = |U|$ formally solves the initial-boundary value problem 
 \begin{equation}
 \left\{
 \begin{array}{ll}
 \partial_{t} \rho+\text{div} (F(\rho) \rho)=0 &  \text{in}\ (0,T) \times \Omega\\
 \rho =\vert U_{0} \vert & \text{at $t=0$}\\
 \rho= \vert U_{b} \vert & \text{on}\, \Gamma.
 \end{array}
 \right.
 \label{KK2}
 \end{equation}
 We follow~\cite{BLN,CR,Serre2} and we extend  notion of \emph{entropy admissible} solution (see~~\cite{Kr}) to initial boundary value problems. 
 \begin{definition}
 A function $\rho \in L^{\infty}((0,T) \times \Omega) \cap BV((0,T) \times \Omega) $ is  an entropy solution of \eqref{KK2} if for all $k \in \mathbb{R}$,  
 \begin{equation}
 \begin{aligned}
 &\int_{0}^{T} \int_{\Omega} \Big\{\vert \rho(t,x)-k \vert\ \partial_{t} \psi + \emph{sgn}(\rho-k)[F(\rho)-F(k)] \cdot \nabla \psi \Big\} \ dx dt \\
 &+ \int_{\Omega} \vert \rho_{0}-k \vert\ \psi(0, \cdot) \ dx -\int_{0}^{T} \int_{\partial \Omega} \emph{sgn}(\vert U_{b} \vert(t,x)-k)\ [F(T(\rho))-F(k)]\cdot \vec n \ \psi \ dx dt \geq 0, 
 \end{aligned}
  \notag
 \end{equation}
 for any positive test function $\psi \in C^{\infty}_{c}([0,T) \times \mathbb{R}^{d}; \mathbb{R}^{+}).$ In the above expression $T(\rho)$ denotes the trace of the function $\rho$ on the boundary $\Gamma$ and $\vec n$ is the outward pointing, unit normal vector to $\Gamma$.
 \end{definition}
 Existence and uniqueness results for entropy admissible solutions of the above systems were obtained by Bardos, le Roux and N{\'e}d{\'e}lec~\cite{BLN} by extending the analysis by Kru{\v{z}}kov to initial-boundary value problems (see also~\cite{CR,Serre2} for a more recent  
 discussion). Note, however, that one cannot expect that the boundary value $|U_b|$ is pointwise attained on the whole boundary $\Gamma$, see again~\cite{BLN,CR,Serre2} for a more extended discussion. 
 
Next, we introduce the equation for the \emph{angular part} of the solution of~\eqref{KK1}. We recall that, if $|U_b|$ and $|U_0|$ are of bounded variation, then so is $\rho$ and hence the trace of $F(\rho) \rho$ on $\Gamma$ is well defined. As usual, we denote it by $T(F(\rho) \rho)$. In particular, we can introduce  the set 
$$
   \Gamma^- : = \big\{ (t, x) \in \Gamma: \; T(F(\rho) \rho) \cdot \vec n <0 \big\}, 
$$
 where as usual $\vec n$ denotes the outward pointing, unit normal vector to $\Gamma$. We consider the vector $\theta=(\theta_{1},\cdots,\theta_{N}) $ and we impose 
  \begin{equation}
 \left\{
 \begin{array}{llll}
 \partial_{t}(\rho \theta)+\text{div}(F(\rho) \rho \theta)=0 & \text{in}\ (0,T) \times \Omega \phantom{\displaystyle{\int}}\\
 \theta=\displaystyle{\frac{U_{0}}{\vert U_{0} \vert}}& \text{at $t=0$}\\
 \theta=\displaystyle{\frac{U_{b}}{\vert U_{b} \vert}} & \text{on} \ \Gamma^{-},
 \end{array}
 \label{KK5}
 \right.
 \end{equation}
where the ratios $U_0 / |U_0|$ and $U_b / |U_b|$ are defined to be an arbitrary unit vector when $|U_0|=0$ and $|U_b|=0$, respectively. Note that the product $U=\theta \rho$ formally satisfies the equation at the first line of~\eqref{KK1}. We now extend the notion of \emph{renormalized entropy solution} given in~\cite{ABD,AD,Delellis2} to initial-boundary value problems.  
 \begin{definition}
 \label{d:res}
 A renormalized entropy solution of~\eqref{KK1} is a function $U \in L^\infty ( (0, T) \times \Omega; \mathbb R^N)$ such that $U = \rho \theta$, where 
 \begin{itemize}
 \item $\rho = |U|$ and $\rho$ is an entropy admissible solution of~\eqref{KK2}.
 \item $\theta = (\theta_1, \dots, \theta_N)$ is a distributional solution, in the sense of Definition~\ref{d:distrsol}, of~\eqref{KK5}.  
 \end{itemize}
 \end{definition}
 Some remarks are here in order. First, we can repeat the proof of \cite[Proposition 5.7]{Delellis1}  and conclude that, under fairly general assumptions, any renormalized entropy solution is an entropy solution. More precisely, let us fix a renormalized entropy solution $U$ and an \emph{entropy-entropy flux pair} $(\eta, Q)$, namely a couple of functions $\eta: \R^N \to \R$, $Q: \R^N \to \R^d$ such that   
 $$
     \nabla \eta D f^i = \nabla Q^i, \quad \text{for every $i=1, \dots, d$.}
 $$
 Assume that 
 $$
     \mathcal L^1 \big\{ r \in \R: \; (f^1)'(r) = \dots = (f^d)' (r)=0 \big\}=0. $$
 By arguing as in~\cite{Delellis1} we conclude that, if $\eta$ is convex, then 
$$
     \int_0^T \int_\Omega \eta (U) \partial_t \phi + Q(U)\cdot  \nabla \phi \, dx dt \ge 0
 $$
 for every \emph{entropy-entropy flux pair} $(\eta, Q)$ and for every nonnegative test function $\phi \in C^\infty_c ((0, T) \times \Omega)$. 
  
Second, we point out that, as the  Bardos, le Roux and N{\'e}d{\'e}lec~\cite{BLN} solutions of scalar initial-boundary value problems, renormalized entropy solutions of the Keyfitz and Kranzer system do not, in general pointwise attain the boundary datum $U_0$ on the whole boundary $\Gamma$. 
 
 We now state our well-posedness result. 
 \begin{theorem}
 \label{t:KK} Assume $\Omega$ is a bounded open set with $C^2$ boundary. Also, assume that $U_0 \in L^\infty (\Omega; \mathbb R^N)$ and $U_b \in L^\infty (\Gamma; \mathbb R^N)$ satisfy $|U_0| \in 
 BV ( \Omega)$, $|U_b| \in BV (\Gamma).$ Then there is a unique renormalized entropy solution of~\eqref{KK1} that satisfies $U \in L^\infty ((0, T)\times \Omega; \mathbb R^N)$. 
 \end{theorem}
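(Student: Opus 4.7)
The plan is to adapt the Ambrosio--Bouchut--De Lellis splitting strategy~\cite{ABD,AD,Br,Delellis1} to the initial-boundary setting, replacing their use of Ambrosio's Cauchy-problem well-posedness for nearly incompressible fields by our Theorem~\ref{IBVP-NC}, and Kru\v{z}kov's theory by the BLN/CR/Serre theory for scalar IBVPs~\cite{BLN,CR,Serre2}.

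\textbf{Existence.} First I would solve the scalar conservation law~\eqref{KK2} for the modulus $\rho=|U|$ with data $|U_0|\in BV(\Omega)$ and $|U_b|\in BV(\Gamma)$: the theory of~\cite{BLN,CR,Serre2} furnishes a unique entropy admissible solution $\rho\in L^\infty((0,T)\times\Omega)\cap BV((0,T)\times\Omega)$, bounded between $0$ and $\max\{\|U_0\|_{L^\infty},\|U_b\|_{L^\infty}\}$. Setting $b:=F(\rho)$ and using smoothness of $F$, one gets $b\in L^\infty\cap BV((0,T)\times\Omega;\R^d)$, and~\eqref{KK2} is precisely the assertion that $(b,\rho)$ is a nearly incompressible pair in the sense of Definition~\ref{near-incom}. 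By~\eqref{e:equal}, the set $\Gamma^-$ defined just before~\eqref{KK5} via $T(F(\rho)\rho)\cdot\vec n<0$ coincides $\mathcal H^d$-a.e. with the set $\Gamma^-$ attached to $(b,\rho)$ through~\eqref{e:gamma}. I would then apply Theorem~\ref{IBVP-NC} componentwise: for each $j=1,\dots,N$ I would construct a distributional solution $\theta_j\in L^\infty((0,T)\times\Omega)$ of~\eqref{KK5} with initial datum $(U_0)_j/|U_0|$ and boundary datum $(U_b)_j/|U_b|$ on $\Gamma^-$ (extended to any prescribed unit vector on the zero set of the modulus). Setting $U:=\rho\theta$ and using that each $\rho\theta_j$ solves the continuity equation driven by $b=F(\rho)$, I would check that $U$ satisfies~\eqref{KK1} in the distributional sense and fulfills Definition~\ref{d:res}.

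\textbf{Uniqueness.} Let $U^{(1)},U^{(2)}$ be two renormalized entropy solutions. By Definition~\ref{d:res} both moduli $\rho^{(i)}:=|U^{(i)}|$ are entropy admissible solutions of the scalar IBVP~\eqref{KK2} with identical data, so~\cite{BLN,CR,Serre2} forces $\rho^{(1)}=\rho^{(2)}=:\rho$. With this common $\rho$, the angular parts $\theta^{(1)},\theta^{(2)}$ are both distributional solutions, in the sense of Definition~\ref{d:distrsol}, of~\eqref{KK5} for the same nearly incompressible field $b=F(\rho)$ and the same initial and boundary data. The uniqueness statement in Theorem~\ref{IBVP-NC} then yields $\rho\theta^{(1)}=\rho\theta^{(2)}$ a.e., i.e.\ $U^{(1)}=U^{(2)}$ a.e. in $(0,T)\times\Omega$.

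\textbf{Main obstacle.} All of the hard analysis sits inside the two black boxes we are combining, so the genuine work is one of compatibility between them. The delicate point is that $|U_b|$ is generically not attained pointwise on $\Gamma$ by the BLN solution $\rho$, and yet we must make sense of the boundary datum $U_b/|U_b|$ for the transport equation precisely on the inflow set of $b=F(\rho)$. The right viewpoint is that the meaningful inflow set is the one detected by the normal trace, namely $\{T(F(\rho)\rho)\cdot\vec n<0\}$, which by~\eqref{e:equal} matches the $\Gamma^-$ of~\eqref{e:gamma} and is exactly what Theorem~\ref{IBVP-NC} requires; the BLN inequality provides the correct matching between the scalar boundary condition and this transport-level $\Gamma^-$. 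A secondary point is to verify a posteriori, by adapting~\cite[Proposition 5.7]{Delellis1} as sketched after Definition~\ref{d:res}, that the $U$ produced in this way is genuinely entropy admissible for the vector system under a nondegeneracy assumption on $F$.
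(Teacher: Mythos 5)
Your overall splitting strategy and your uniqueness argument match the paper's proof. The genuine gap is in the existence part: you write that you would ``check that $U$ satisfies~\eqref{KK1} in the distributional sense and fulfills Definition~\ref{d:res}'', but you never address the one non-trivial compatibility condition that Definition~\ref{d:res} imposes, namely that $\rho = |U| = \rho\,|\theta|$, i.e.\ that $|\theta|=1$ on the set $\{\rho>0\}$. This is not automatic. Applying Theorem~\ref{IBVP-NC} componentwise only gives the maximum principle $|\theta_j|\le 1$ for each $j$, not $\sum_j \theta_j^2 = 1$; a priori the weak-$*$ limit procedure underlying the existence proof of Theorem~\ref{IBVP-NC} could produce a $\theta$ with $|\theta|<1$ on a set of positive measure, and then $U=\rho\theta$ would fail to satisfy $|U|=\rho$ and hence would not be a renormalized entropy solution. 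Closing this gap is the main analytic content of the existence proof in the paper: one uses the interior renormalization property~\cite[Lemma~5.10]{Delellis1} together with the trace renormalization property (Theorem~\ref{trace-renorm}) to show that each $\theta_j^2$ is a distributional solution, in the sense of Definition~\ref{d:distrsol}, of the IBVP driven by $F(\rho)$ with data $U_{0j}^2/|U_0|^2$ and $U_{bj}^2/|U_b|^2$; summing over $j$ shows $|\theta|^2$ solves the same IBVP with initial and boundary data identically $1$, and since the constant $1$ is also a solution (because $\rho$ itself solves~\eqref{KK2}), the uniqueness part of Theorem~\ref{IBVP-NC} yields $\rho|\theta|^2=\rho$, hence $|U|=\rho$. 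Without some such renormalization step your existence argument does not close.

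A smaller point: the ``a posteriori entropy admissibility'' you flag as a secondary obstacle is not actually part of what Theorem~\ref{t:KK} asserts; the theorem only claims existence and uniqueness of \emph{renormalized} entropy solutions in the sense of Definition~\ref{d:res}. The statement that renormalized entropy solutions are entropy solutions in the Lax sense (under a nondegeneracy condition on $F$) is a separate remark made after Definition~\ref{d:res} and plays no role in the proof of the theorem.
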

 \begin{proof} We first establish existence, next uniqueness. \\
 {\sc Existence:} first, we point out that the results in~\cite{BLN,CR,Serre2} imply that there is an entropy admissible solution of~\eqref{KK2} satisfying
 $\rho \in L^\infty ((0, T) \times \Omega) \cap BV ((0, T) \times \Omega).$ 
 Also, $\rho$ satisfies the maximum principle, namely
 \begin{equation}
 \label{e:rhomaxprin}
     0 \leq \rho \leq \max \big\{ \| U_0 \|_{L^\infty}, \|U_b \|_{L^\infty} \big\}. 
 \end{equation}
 For every $j=1, \dots, N$ we consider the initial-boundary value problem 
  \begin{equation}
 \left\{
 \begin{array}{llll}
 \partial_{t}(\rho \theta_j)+\text{div}(F(\rho) \rho \theta_j)=0 & \text{in}\ (0,T) \times \Omega \phantom{\displaystyle{\int}}\\
 \theta_j=\displaystyle{\frac{U_{0j}}{\vert U_{0} \vert}}& \text{at $t=0$}\\
 \theta_j=\displaystyle{\frac{U_{bj}}{\vert U_{b} \vert}} & \text{on} \ \Gamma^{-},
 \end{array}
 \label{KK6}
 \right.
 \end{equation}
where $U_{0j}$ and $U_{bj}$ is the $j$-th component of $U_0$ and $U_b$, respectively. The existence of a distributional solution $\theta_j$ follows from the existence part in Theorem~\ref{IBVP-NC}. 

We now set $U: = \rho \theta$, where $\theta = (\theta_1, \dots, \theta_N)$. To conclude the existence part we are left to show that $|U|=\rho$. To this end, we point out that, by combining~\cite[Lemma 5.10]{Delellis1} (renormalization property inside the domain) with Theorem~\ref{trace-renorm} (trace renormalization property) and by arguing as in \S~\ref{s:uni}, we conclude that, for every $j=1, \dots, N$, $\theta^2_j$ is a distributional solution, in the sense of Definition~\ref{d:distrsol}, of the initial-boundary value problem 
  \begin{equation*}
 \left\{
 \begin{array}{llll}
 \partial_{t}(\rho \theta^2_j)+\text{div}(F(\rho) \rho \theta^2_j)=0 & \text{in}\ (0,T) \times \Omega \phantom{\displaystyle{\int}}\\
 \theta_j=\displaystyle{\frac{U^2_{0j}}{\vert U_{0} \vert^2}}& \text{at $t=0$}\\
 \theta=\displaystyle{\frac{U^2_{bj}}{\vert U_{b} \vert^2}} & \text{on} \ \Gamma^{-}.
 \end{array}
 \right.
 \end{equation*}
By adding from $1$ to $N$, we conclude that $|\theta|^2$ is a distributional solution of 
\begin{equation*}
 \left\{
 \begin{array}{llll}
 \partial_{t}(\rho |\theta|^2)+\text{div}(F(\rho) \rho |\theta|^2)=0 & \text{in}\ (0,T) \times \Omega \\
 \theta_j=1& \text{at $t=0$}\\
 \theta=1 & \text{on} \ \Gamma^{-}.
 \end{array}
 \right.
 \end{equation*}
 By recalling the equation at the first line of~\eqref{KK2} we infer that $|\theta|^2 =1$ is a solution of the above initial-boundary value problem. By the uniqueness part of Theorem~\ref{IBVP-NC}, we then deduce that $\rho |\theta|^2= \rho$ and this concludes the proof of the existence part. \\
 {\sc Uniqueness:} assume $U_1$ and $U_2$ are two renormalized entropy solutions, in the sense of Definition~\ref{d:res}, of the initial-boundary value 
problem~\eqref{KK1}. Then $\rho_1: = |U_1|$ and $\rho_2 : =|U_2|$ are two 
entropy admissible solutions of the initial-boundary value problem~\eqref{KK2} and hence $\rho_1=\rho_2$. By applying the uniqueness part of Theorem~\ref{IBVP-NC} to the initial-boundary value problem~\eqref{KK6}, for every $j=1, \dots, N$, we can then conclude that $U_1 =U_2$. 
  \end{proof}

 \section*{Acknowledgments}
This paper has been written while APC was a postdoctoral fellow at the University of Basel supported by a ``Swiss Government Excellence Scholarship'' funded by the State Secretariat for Education, Research and Innovation (SERI). APC would like to thank the SERI for the support and the Department of Mathematics and Computer Science of the University of Basel for the kind hospitality. GC was partially supported by the Swiss National Science Foundation (Grant 156112). LVS is a member of the GNAMPA group of INdAM (``Istituto Nazionale di Alta Matematica"). Also, she would like to thank the Department of Mathematics and Computer Science of the University of Basel for the kind hospitality during her visit, during which part of this work was done.

\small

\end{document}